\documentclass[10pt]{article}

\usepackage[utf8]{inputenc}
\usepackage[T1]{fontenc}
\usepackage{amsmath,amssymb,amsthm}
\usepackage{mathtools}
\usepackage{geometry}
\usepackage{graphicx}
\usepackage{hyperref}
\usepackage{enumitem}
\usepackage{mathrsfs}
\usepackage{mwe}
\usepackage{booktabs}
\usepackage{arydshln}
\usepackage{color}
\usepackage{listings}
\usepackage{orcidlink}
\usepackage[vlined]{algorithm2e}

\theoremstyle{plain}
\newtheorem{theorem}{Theorem}[section]
\newtheorem{proposition}[theorem]{Proposition}
\newtheorem{lemma}[theorem]{Lemma}

\theoremstyle{definition}
\newtheorem{definition}[theorem]{Definition}
\theoremstyle{remark}
\newtheorem{remark}[theorem]{Remark}
\theoremstyle{definition}
\newtheorem{assumption}[theorem]{Assumption}

\newtheorem*{proof*}{Proof}

\title{
	A Numerical Investigation of the Rayleigh--Faber--Krahn Inequality for Polyhedral Domains
}

\author{
	Josu{\'e} D. D{\'i}az-Avalos%
	\thanks{
	\texttt{josue.diazavalos@uni-due.de},
	\orcidlink{0000-0002-2585-4315} (Corresponding author)
	}
	\and
	Antoine Laurain%
	\thanks{
		\texttt{antoine.laurain@uni-due.de},
		\orcidlink{0000-0002-8733-5190}
	}
}

\date{Fakult\"at f\"ur Mathematik, Universit\"at Duisburg-Essen, Germany}

\begin{document}

\maketitle

\begin{abstract}
	We present numerical results for the problem of minimizing
	the first Dirichlet eigenvalue of the Laplacian among
	three-dimensional convex polyhedra with a prescribed number of facets.
	This problem can be viewed as a polyhedral version of the classical Rayleigh--Faber--Krahn inequality.
	Using a parameterization of polytopes by supporting hyperplanes, the problem is first reformulated as an equivalent finite-dimensional constrained minimization
	problem.
	A Lagrangian framework is employed for the numerical solution.
	Under the assumption that each vertex is incident to exactly $d$ facets, the sensitivity analysis of the Lagrangian combines finite-dimensional perturbations of the facets of  $d$-dimensional polyhedra with infinite-dimensional shape derivatives of both the eigenvalue and the volume.
	Depending on the number of facets, the numerical results yield well-known polyhedra, such as
	Platonic solids, prisms, and the tetrakaidecahedron,
	or reveal nonregular optimal shapes exhibiting several symmetries.
\end{abstract}

\textbf{Keywords:}
shape derivative,
Rayleigh--Faber--Krahn Inequality,
Dirichlet eigenvalue,
convex polyhedra.

\section{Introduction}

Eigenvalue problems play a fundamental role in the analysis
of partial differential equations.
The spectrum of elliptic operators determines stability properties,
long-time behavior of solutions to evolution equations,
and natural vibration frequencies of physical systems.
Understanding the dependence of eigenvalues on the geometry
of the domain is therefore a classical problem in spectral theory.

In 1877, Lord Rayleigh conjectured that, among planar domains of fixed area,
the disk minimizes the first Dirichlet eigenvalue of the Laplacian.
This conjecture was proved independently by Georg Faber (1923) and Edgar Krahn (1924)~\cite{faber,krahn}.
P{\'o}lya and Szeg{\"o} considered the analogous problem in the class of planar polygons with
a fixed number of sides and fixed area, and conjectured that the regular polygon is the optimal shape.
The conjecture has been proved for polygons with $N=3,4$ sides using Steiner Symmetrization, see~\cite[Thm.~3.3.3]{MR2251558}.
For the cases $N\geq5$, significant progress has been achieved recently by Bogosel and Bucur who showed that proving 
the conjecture can be reduced to a finite number of certified numerical
computations~\cite{MR4683390}.
Building on this result, they proved in \cite{bogosel2024}, using certified finite element approximations,  that regular polygons are
local minimizers for $N=5,6$.

Other recent work related to spectral optimization in polygons include \cite{MR4742089},
where  an asymptotic expansion with respect to $N$ is derived for the first Dirichlet eigenvalue of a regular $N$-gon of area $\pi$.
In \cite{MR2264470}, they numerically  investigate new types of bounds for the first Dirichlet eigenvalue of polygons.
In particular, they conjecture that the first Dirichlet eigenvalues are monotonically decreasing in $N$ and confirm the monotonicity by numerical experiments.
The author in \cite{MR4729687} explicitly constructs a set of $N$-gons such that the first eigenvalue is minimized by the
regular convex $N$-gon,  in the collection of $N$-gons with the same area, for all sufficiently large $N$.
Polygons are also solutions of maximization of the Dirichlet eigenvalue with a convexity constraint, as shown in \cite{MR4726519}.
We also refer to the survey \cite{Henrot2017} for various results on spectral problems in polygonal domains.

Motivated by these recent advances, and by the fact that the Rayleigh--Faber--Krahn inequality holds in any dimension,
it is natural to consider the analogous problem for polyhedral domains in three and higher dimensions.
While a few references on spectral optimization provide numerical results in three dimensions, they usually focus on smooth sets. 
We also mention \cite{Greif}, where the eigenvalues and eigenfunctions of the Laplacian on the surfaces of regular polyhedra are investigated.
In this work, we present numerical results for the Rayleigh--Faber--Krahn problem restricted to polyhedral domains in three dimensions. 
This study also continues the line of research in \cite{MR5011129}, where numerical results were obtained for the polyhedral version of the Saint--Venant inequality.
Our aim is to stimulate interest in this question, provide evidence for possible optimal shapes, and offer a basis for new conjectures and further theoretical investigation.

Let $\mathcal{O}$ be the set of open bounded subsets of $\mathbb{R}^d$.
Let $\lambda\colon\mathcal{O}\rightarrow\mathbb{R}$ be the shape functional
\[
	\lambda(\Omega)\coloneqq
	\min_{u\in H_0^1(\Omega)\setminus\{0\}}
	\frac{\displaystyle\int_\Omega|\nabla u|^2}{\displaystyle\int_\Omega u^2}.
\]
It is well-known that $\lambda(\Omega)$ is the first Dirichlet eigenvalue
of the Laplacian corresponding to $\Omega$. 
If $\Omega$ is connected, then $\lambda(\Omega)$ is a simple eigenvalue
and a positive number. 
Let $B\subset\mathbb{R}^d$ be the Euclidean ball.
The classical Rayleigh--Faber--Krahn inequality states that
\begin{equation}\label{eq:FK}
	|B|^{2/d}\lambda(B)\leq|\Omega|^{2/d}\lambda(\Omega)
	\quad\forall\,\Omega\in\mathcal{O}.
\end{equation}
Let $\mathcal{K}$ be a finite set of indices with cardinality $|\mathcal{K}|>d$.
Let $\mathcal{P}\subset\mathcal{O}$ be the class of (convex) polytopes in
$\mathbb{R}^d$, with $|\mathcal{K}|$ facets and dimension $d$.
In this work, we provide numerical results for the following problem:
\begin{equation}\label{eq:polyFK}
	P^*\in\mathcal{P}:\quad
	|P^*|^{2/d}\lambda(P^*)\leq|P|^{2/d}\lambda(P)
	\quad \forall\,P\in\mathcal{P},
	\qquad
	\text{for }d=3.
\end{equation}
To the best of our knowledge, there are no previous works
showing numerical results for problem \eqref{eq:polyFK}
in dimensions $d\geq3$.

In Section~\ref{sec:sp} we recall the shape derivatives
of the first Dirichlet eigenvalue of the Laplacian and the volume,
with emphasis on convex domains.
In Section~\ref{sec:main}, polytopes and their perturbation are treated. 
The optimization problem to be solved is stated in Section~\ref{sec:opt_prb}.
In Section~\ref{sec:num_mtd}, we describe the numerical method and
apply the results of the previous sections.
Finally, the numerical results are provided in Section~\ref{sec:num}.
We provide an appendix with \texttt{GeoGebra} commands
for reproducing the optimal shapes.


\section{Shape derivative}\label{sec:sp}

In this section, we recall the expressions of the shape derivatives of
the volume and of the first Dirichlet eigenvalue of the Laplacian.
We follow the shape calculus framework of Henrot and Pierre \cite{MR3791463},
which is based on domain perturbations induced by
a smooth family of diffeomorphisms.

\begin{theorem}\label{thm:sp}
	Let $\Omega\in\mathcal{O}$ be a convex domain.
	Let $\mathrm{Id}$ denote the identity mapping.
	Let
	\[
		\{\Phi_t\}_{t\in[0,\tau)}, \quad
		\Phi_t\colon\mathbb{R}^{d}\to\mathbb{R}^{d}, \quad
		\text{with}\;\tau>0,
	\]
	be a family of maps such that
	$\Phi_{t}-\mathrm{Id}\in W^{1,\infty}(\mathbb{R}^d;\mathbb{R}^d)$,
	$t\mapsto\Phi_t$ is differentiable at $t=0$, and $\Phi_0=\mathrm{Id}$.
	Set
	\[
		\Omega_t\coloneqq\Phi_t(\Omega)
		\quad\text{and}\quad
		\theta\coloneqq\frac{d}{dt}\Phi_t\Big|_{t=0}.
	\]
	The functions
	$t\mapsto|\Omega_t|$ and
	$t\mapsto\lambda(\Omega_t)$
	are differentiable at $t=0$, with derivatives
	\begin{equation}\label{eq:sp_vol}
		\frac{d}{dt}|\Omega_t|\Big|_{t=0} =
		\int_{\partial\Omega} \theta\cdot\nu
	\end{equation}
	and
	\begin{equation}\label{eq:sp_eig}
		\frac{d}{dt}\lambda(\Omega_t)\Big|_{t=0}=
		-\int_{\partial\Omega} (\partial_\nu u)^2 \theta\cdot\nu,
	\end{equation}
	where $\nu$ denotes the outward unit normal vector to $\partial\Omega$ and
	$u\in H_0^1(\Omega)$ is the $L^2$-normalized eigenfunction associated
	to $\lambda(\Omega)$, that is,
	\begin{equation}\label{eq:eig_pb}
		\int_\Omega u^2=1
		\quad\text{and}\quad
		\int_\Omega\nabla u\cdot\nabla v =
		\lambda(\Omega)\int_\Omega uv \quad\forall v\in H_0^1(\Omega).
	\end{equation}
	The derivatives \eqref{eq:sp_vol} and \eqref{eq:sp_eig}  are called shape derivatives.
\end{theorem}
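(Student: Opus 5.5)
The plan is to prove both formulas in their distributed (volume-integral) form first, since that form needs only $\theta\in W^{1,\infty}$. Afterwards I convert to boundary integrals using the $H^2$ regularity of $u$, which is available because $\Omega$ is convex.

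For the volume, the change of variables $x=\Phi_t(y)$ gives
\[
|\Omega_t|=\int_\Omega \det D\Phi_t .
\]
Since $t\mapsto \Phi_t-\mathrm{Id}$ is differentiable at $0$ in $W^{1,\infty}$, we have $\det D\Phi_t = 1 + t\,\mathrm{div}\,\theta + o(t)$ uniformly on $\Omega$. Hence
\[
\frac{d}{dt}|\Omega_t|\Big|_{t=0}=\int_\Omega \mathrm{div}\,\theta .
\]
The divergence theorem holds on Lipschitz domains, and convex domains are Lipschitz, so this equals $\int_{\partial\Omega}\theta\cdot\nu$, which is \eqref{eq:sp_vol}.

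For the eigenvalue, I transport the Rayleigh quotient to the fixed domain. For $v\in H^1_0(\Omega)$, set
\[
A_t = \det(D\Phi_t)\,D\Phi_t^{-1}D\Phi_t^{-T},\qquad J_t=\det D\Phi_t,
\]
so that
\[
\lambda(\Omega_t)=\min_{v\in H_0^1(\Omega)\setminus\{0\}} \frac{\int_\Omega A_t\nabla v\cdot\nabla v}{\int_\Omega J_t v^2}.
\]
Expanding, $A_t=I+tA'+o(t)$ with $A'=(\mathrm{div}\,\theta)I-D\theta-D\theta^T$, and $J_t=1+t\,\mathrm{div}\,\theta+o(t)$, both in $L^\infty$.

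The two-sided bound works as follows. Testing with $v=u$ gives the upper bound
\[
\lambda(\Omega_t)\le \lambda + t\Big(\int_\Omega A'\nabla u\cdot\nabla u-\lambda\int_\Omega \mathrm{div}\,\theta\,u^2\Big)+o(t).
\]
For the lower bound I use the transported normalized eigenfunctions $u_t$. These are bounded in $H^1_0$. Since $\lambda(\Omega)$ is simple ($\Omega$ is connected), and by lower semicontinuity, $u_t\to u$ strongly in $H^1$. Testing the unperturbed quotient with $u_t$ then yields the matching lower bound. Together these show differentiability, with derivative equal to the bracketed distributed expression.

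The step I expect to be the main obstacle is converting this distributed expression to the boundary form $-\int_{\partial\Omega}(\partial_\nu u)^2\theta\cdot\nu$. This requires integrating by parts twice, via the identity
\[
\mathrm{div}\big((\theta\cdot\nabla u)\nabla u\big)-\tfrac12\mathrm{div}\big(|\nabla u|^2\theta\big)=(\theta\cdot\nabla u)\Delta u+D\theta\nabla u\cdot\nabla u-\tfrac12\mathrm{div}\,\theta|\nabla u|^2 ,
\]
and for this one needs $u\in H^2(\Omega)$. Here convexity is essential. For convex domains, the Kadlec/Grisvard theorem gives $H^2$ regularity of the Dirichlet problem $-\Delta u=\lambda u\in L^2$. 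Then $\nabla u\in H^1$ has an $L^2$ trace.

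Since $u=0$ on $\partial\Omega$, we have $\nabla u=(\partial_\nu u)\nu$ a.e. on $\partial\Omega$. Integrating the identity over $\Omega$ and using $-\Delta u=\lambda u$, the boundary terms combine to
\[
(\theta\cdot\nu)(\partial_\nu u)^2-\tfrac12(\partial_\nu u)^2\theta\cdot\nu .
\]
The $\lambda$-terms cancel after using
\[
\lambda\int_\Omega(\theta\cdot\nabla u)u=-\tfrac{\lambda}{2}\int_\Omega \mathrm{div}\,\theta\,u^2 .
\]
Rigorously, the divergence theorem for $W^{1,1}$ vector fields on Lipschitz domains is justified by approximating $u$ in $H^2$ by smooth functions. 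Collecting terms gives \eqref{eq:sp_eig}.
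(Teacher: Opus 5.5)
Your proof is correct and follows the route the paper indicates. The paper only cites Henrot's book for this statement, remarking that convexity gives simplicity of $\lambda(\Omega)$ and $u\in H^2(\Omega)$ (via Grisvard), and that the distributed form $\int_\Omega S_1\colon D\theta$ becomes the boundary form through the divergence theorem; this is your structure, and the differentiability step the paper leaves to the reference is supplied by your two-sided Rayleigh-quotient bound. One minor wording point: in the last step the $\lambda$-terms do not cancel; rather, $-\lambda\int_\Omega(\theta\cdot\nabla u)u=\tfrac{\lambda}{2}\int_\Omega \mathrm{div}\,\theta\,u^2$ equals exactly $-\tfrac12$ times the term $-\lambda\int_\Omega \mathrm{div}\,\theta\,u^2$ of the distributed derivative, which is what makes the identity close.
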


This result can be found in \cite[Thm. 2.5.1 and 2.5.3]{MR2251558}.
The assumption that $\Omega$ is convex implies that $\Omega_t$ is connected,
and hence $\lambda(\Omega_t)$ remains simple, for small values of $t$.
Moreover, since $\Omega$ is convex, $u\in H^2(\Omega)$;
see for instance \cite[Thm.~3.2.1.2]{MR3396210}.

Alternatively, one can also compute the shape derivative of the first Dirichlet eigenvalue in {\it distributed} or {\it weak form}, as
\begin{equation}\label{eq:speigdistrib}
	\frac{d}{dt}\lambda(\Omega_t)\Big|_{t=0}=
	\int_{\Omega} S_1\colon\! D\theta,
\end{equation}
where
$S_1 \coloneqq (|\nabla u|^2 - \lambda(\Omega) u^2) I -2\nabla u\otimes \nabla u $
is a matrix-valued function and
$I$ denotes the identity matrix; see the proof of \cite[Thm.~4]{MR4572197} or \cite[Thm.~2.1]{MR4683390}, in which the second-order shape derivative in distributed form is also computed.
Note that, under appropriate regularity assumptions on the set $\Omega$, the boundary expression~\eqref{eq:sp_eig} and the distributed expression \eqref{eq:speigdistrib}
are equal and related through the divergence theorem, see \cite[Prop.~1 and 2]{MR4053037}. 
In this work, we only use the boundary expressions \eqref{eq:sp_vol} and \eqref{eq:sp_eig},
but the distributed shape derivative can also be employed for numerical purposes in combination with the expression of $D\theta$ given in \eqref{eq:tht_Dtht}.

\section{Perturbation of polyhedral domains}\label{sec:main}

We begin with the standard definitions for polytopes;
see \cite[Ch.~2]{MR0683612} for more details.

\begin{definition}\label{def:ph_set}
	A subset $P\subset\mathbb{R}^{d}$ is called \emph{polyhedral set} if it is
	the intersection of finitely many closed half-spaces.
	A polyhedral set $P$ is \emph{irreducible} if the number of half-spaces
	that determine $P$ cannot be reduced.
	Given a polyhedral set $P$, a subset $F$ of $P$ is called a \emph{face}
	of $P$ if it is the intersection of $P$ with a supporting
	hyperplane of $P$.
	The faces of dimension\footnote{
		The dimension of a set is understood as the dimension of
		the smallest affine space containing it.
	}
	$0$ and $\dim P-1$ are called \emph{vertices} and \emph{facets},
	respectively.
	If $P$ is an irreducible polyhedral set of dimension $d$, a subset $F$ of $P$
	is a facet if and only if it is the intersection of $P$ with one of its
	half-spaces.
\end{definition}

\begin{definition}\label{def:polytope}
	A \emph{polytope} is a subset of $\mathbb{R}^{d}$ which is the convex hull
	of a non-empty finite set of points.
	A non-empty subset $P\subset\mathbb{R}^{d}$ is a polytope if and only if it
	is a bounded polyhedral set.
	A polytope $P$ of dimension $d$ is said to be \emph{simple} if each vertex
	of $P$ is contained in exclusively $d$ facets of $P$.
\end{definition}

Note that polytopes are thus closed sets.
For our purposes, it is more convenient to work with open sets, hence the relative interior of a polytope $P$, as defined in Definition~\ref{def:polytope}, will also be called polytope.
As defined in the introduction, $\mathcal{P}\subset\mathcal{O}$
denotes the set of (open) polytopes of dimension $d$ in $\mathbb{R}^d$
(recall that $\mathcal{O}$ is the set of open bounded subsets of $\mathbb{R}^d$).
Without loss of generality, we assume that every polytope in $\mathcal{P}$
contains the origin and that any representation as a polyhedral set is irreducible.

\begin{definition}\label{def3.3}
	Let $P\in\mathcal{P}$.
	A family of polyhedral sets $\{P_t\}_{t \in\mathbb{R}}$ is called a
	\emph{$C^1$-perturbation} of $P$ if
	\[
		P_t \coloneqq \bigcap_{k\in \mathcal{K}}
		\left\{x \in \mathbb{R}^d \mid n_k(t) \cdot x < a_k(t) \right\}
		\quad \forall t \in \mathbb{R}
		\quad\text{and}\quad
		P_0 = P,
	\]
	where $n_k\colon\mathbb{R}\to\mathbb{S}^{d-1}$ and
	$a_k\colon\mathbb{R}\to(0,\infty)$ are $C^1$-functions, 
	for $k\in\mathcal{K}$.
	Observe that the $n_k$ and $a_k$ functions represent normal vectors and
	positive distances to the origin, respectively.
	Thus, a \mbox{$C^1$-perturbation} is determined by its function pairs
	$\{(n_{k},a_{k})\}_{k\in\mathcal{K}}$.
	Moreover, the $|\mathcal{K}|$ facets of $P$ can be expressed as
	$F_k\cap\overline{P}$ for $k\in\mathcal{K}$, where $F_k$ is the hyperplane
	\begin{equation}\label{eq:fc_def}
		F_k = \left\{x\in\mathbb{R}^d\mid n_k(0) \cdot x = a_k(0)\right\}.
	\end{equation}
\end{definition}

\begin{definition}[see \cite{MR2039968,MR2336001}]
	Let $P\in\mathcal{P}$ be a simple polytope.
	Let $\mathcal{V}$ be the set of vertices of $P$.
	There exist continuous functions $b_v\colon P\to\mathbb{R}$,
	for each $v\in\mathcal{V}$,
	called \emph{barycentric coordinates} of $P$,
	which satisfy the following properties: given $x\in P$,
	\begin{itemize}
		\item[(i)] $b_v(x)>0$ for all $v\in\mathcal{V}$ (positivity),
		\item[(ii)] $\displaystyle\sum_{v\in\mathcal{V}}b_v(x)=1$
		(partition of unity), and
		\item[(iii)] $\displaystyle\sum_{v\in\mathcal{V}}b_v(x)v=x$
		(linear precision).
	\end{itemize}
	Let $\text{ind}(v)$ denote the set of indices $k\in\mathcal{K}$
	such that the facet $F_k\cap\overline{P}$ contains the vertex $v$. 
	Since $P$ is simple, we have $|\text{ind}(v)|=d$.
	Each barycentric coordinate $b_v$ is given by
	\[
		b_v(x)\coloneqq\frac{\omega_v(x)}{
			\displaystyle \sum_{v'\in\mathcal{V}}\omega_{v'}(x)
		},
		\quad\text{with}\quad
		\omega_v(x)\coloneqq\frac{|\det([n^\top_k]_{k\in\text{ind}(v)})|}{
			\displaystyle \prod_{k\in\text{ind}(v)} n_k\cdot(v-x) 
		},
	\]
	where $n_k$ is the unit normal vector to $F_k$,
	and $[n^\top_k]_{k\in\text{ind}(v)}$ is the square matrix with rows $n_k$,
	for $k\in\text{ind}(v)$.
	It was proved in \cite{MR3166966} that the barycentric coordinates
	have bounded gradient on $P$, namely, there exists a constant
	$C_{P,d}>0$ such that
	\begin{equation}\label{estimate-nabla-bv}
		\sup_{x\in P}\sum_{v\in\mathcal{V}}|\nabla b_v(x)|\leq C_{P,d}.
	\end{equation}
	This, the mean value theorem, and the fact that $P$ is convex imply that
	each $b_v$ is Lipschitz on $P$, and therefore, it can be uniquely
	extended to a Lipschitz function on $\overline{P}$.
	This extension preserves the partition of unity and linear precision
	properties, but it can vanish on some faces.
\end{definition}

The following result was proved in \cite[Thm.~13]{MR5011129}.
Here, we write it for reference, but in terms of \mbox{$C^1$-perturbations}.

\begin{theorem}\label{thm:Tbi}
	Let $P\in\mathcal{P}$ be a simple polytope with a
	$C^1$-perturbation $\{P_t\}_{t\in\mathbb{R}}$.
	Then there exist $\tau>0$ and a map
	$T\colon\overline{P}\times(-\tau,\tau)\to\mathbb{R}^{d}$
	such that $T(P,t)=P_t$, $T(\partial P,t)=\partial P_t$, and
	$T(\cdot,t)\colon\overline{P}\to\overline{P_t}$ is a
	bi-Lipschitz homeomorphism, for each $t\in(-\tau,\tau)$. Furthermore,
	\begin{equation}\label{eq:bdexpr}
		\int_{\partial P}\phi\,\theta\cdot \nu=
		\sum_{k\in\mathcal{K}}
		\left(
		a_{k}^{\prime}(0)\int_{F_k\cap\overline{P}}\phi-
		\begin{bmatrix}
			\displaystyle\int_{F_k\cap\overline{P}}\phi\,x_{1} \\
			\vdots \\
			\displaystyle\int_{F_k\cap\overline{P}}\phi\,x_{d}
		\end{bmatrix} \cdot n_{k}^{\prime}(0)
		\right)
		\quad \forall\phi\in L^{2}(\partial P),
	\end{equation}
	where $\theta\colon\overline{P}\to\mathbb{R}^d$ is defined by
	$\theta(x)\coloneqq\partial_{t}T(x,0)$,
	$\{(n_{k},a_{k})\}_{k\in\mathcal{K}}$ are the function pairs that determine
	$\{P_t\}_{t\in\mathbb{R}}$ in Definition~\ref{def3.3}, and
	$F_k\cap\overline{P}$ is the facet of $P$
	associated to $\{n_k,a_k\}$.
\end{theorem}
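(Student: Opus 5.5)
The plan is to build $T$ explicitly from the vertices, using the barycentric coordinates just introduced.

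\textbf{Step 1: vertex trajectories.} Since $P$ is simple, each vertex $v\in\mathcal{V}$ is the unique solution of the $d\times d$ linear system $n_k(0)\cdot x=a_k(0)$, $k\in\text{ind}(v)$. The matrix $[n_k^\top]_{k\in\text{ind}(v)}$ is therefore invertible. For small $|t|$ we define $v(t)$ as the solution of $n_k(t)\cdot x=a_k(t)$, $k\in\text{ind}(v)$. By the implicit function theorem (or Cramer's rule) $v(t)$ is $C^1$, and differentiating the system gives
\[
n_k(0)\cdot v'(0)=a_k'(0)-n_k'(0)\cdot v \qquad\text{for } k\in\text{ind}(v).
\]
Simplicity is stable under small perturbations: the inactive constraints remain strict at $v(t)$ by continuity. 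Hence for $|t|<\tau$ the polytope $P_t$ is simple, has the same combinatorial type, and its vertices are exactly the points $v(t)$.

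\textbf{Step 2: the map.} Set $T(x,t)\coloneqq\sum_{v\in\mathcal{V}}b_v(x)\,v(t)$, using the Lipschitz extension of the $b_v$ to $\overline{P}$. By linear precision, $T(\cdot,0)=\mathrm{Id}$. Moreover, $T(\cdot,t)-\mathrm{Id}=\sum_v b_v(x)(v(t)-v)$, so by \eqref{estimate-nabla-bv} its Lipschitz constant is at most $C_{P,d}\max_v|v(t)-v|$, which tends to $0$. For small $t$ the map $T(\cdot,t)$ is therefore a Lipschitz perturbation of the identity with small constant, and hence injective and bi-Lipschitz onto its image.

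\textbf{Step 3: $T(\partial P,t)=\partial P_t$.} This is the main obstacle. The extended $b_v$ vanish on every face not containing $v$. This is where the explicit form of $\omega_v$ enters: on a face $G$, the weights of the vertices in $G$ blow up relative to the others. Consequently, for $x$ in a face $G$, $T(x,t)$ is a convex combination of the moved vertices of the corresponding face $G(t)$ of $P_t$, so $T$ maps $\partial P$ into $\partial P_t$. Since $T(\cdot,t)$ maps $\overline{P}$ into $\overline{P_t}$ by convexity, a degree/invariance-of-domain argument then gives surjectivity. Concretely, the map is a homeomorphism close to the identity that sends boundary into boundary, and the boundaries are topological spheres. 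I would justify the vanishing property via the known boundary behavior of Wachspress-type coordinates cited in \cite{MR2039968,MR2336001}, and I expect this to be the delicate point.

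\textbf{Step 4: formula \eqref{eq:bdexpr}.} We have $\theta(x)=\sum_v b_v(x)v'(0)$. On the facet $F_k\cap\overline{P}$ with normal $\nu=n_k(0)$, only vertices with $k\in\text{ind}(v)$ contribute, and for these Step 1 gives $v'(0)\cdot n_k=a_k'(0)-n_k'(0)\cdot v$. Using the partition of unity and linear precision restricted to the facet,
\[
\theta(x)\cdot\nu=\sum_{v}b_v(x)\bigl(a_k'(0)-n_k'(0)\cdot v\bigr)=a_k'(0)-n_k'(0)\cdot x .
\]
Multiplying by $\phi$, integrating over each facet, and summing over $k$ yields \eqref{eq:bdexpr}. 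Lower-dimensional faces have zero surface measure and do not contribute.
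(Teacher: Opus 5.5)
Your proposal is correct and follows essentially the same route as the paper, which (through \cite[Thm.~13]{MR5011129} and Remark~\ref{rem:Tmap}) builds $T(x,t)=\sum_{v}b_v(x)z_v(t)$ from vertex trajectories $z_v$ given by the implicit function theorem, obtains the bi-Lipschitz property from \eqref{estimate-nabla-bv} under the smallness condition \eqref{eq:zv}, and derives \eqref{eq:bdexpr} by differentiating the facet-preservation identity $T(x,t)\cdot n_k(t)=a_k(t)$, exactly as in your Step~4. The one claim you state too quickly is that the $v(t)$ are \emph{all} the vertices of $P_t$, since continuity of the inactive constraints does not by itself rule out new vertices; however, your invariance-of-domain argument in Step~3, together with the easy stability of boundedness of $P_t$ (cf.\ \cite[Lem.~6]{MR5011129}), yields this as a byproduct, so nothing essential is missing.
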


\begin{remark}\label{rem:Tmap}
	According to \cite[Thm.~13]{MR5011129},
	the map $T\colon\overline{P}\times(-\tau,\tau)\to\mathbb{R}^{d}$
	has the form
	\[
		T(x,t)=\sum_{v\in\mathcal{V}}b_{v}(x)z_{v}(t),
	\]
	where each $z_v:(-\tau,\tau)\to\mathbb{R}^d$ is a \mbox{$C^1$-function}
	such that $z_v(0)=v$ and
	\begin{equation}\label{eq:zv}
		\sum_{v\in\mathcal{V}}|z_v(t)-v| < \frac{1}{2C_{P,d}}
		\quad\forall t\in(-\tau,\tau).
	\end{equation}
	It is shown in \cite[Lem.~7]{MR5011129} that 
	the set of function pairs $\{(n_{k},a_{k})\}_{k\in\mathcal{K}}$
	uniquely determines the functions $\{z_v\}_{v\in\mathcal{V}}$ via
	the implicit function theorem.
	Moreover, $T$ can be expressed as a perturbation of the identity, namely
	\[
		T(x,t)=x+\sum_{v\in\mathcal{V}}b_{v}(x)(z_{v}(t)-v),
	\]
	from which we obtain the expressions
	\[
		\theta(x)=\partial_{t}T(x,0)=\sum_{v\in\mathcal{V}}b_v(x)z^{\prime}_v(0)
	\]
	and
	\begin{equation}\label{eq:tht_Dtht}
		D\theta(x)=\sum_{v\in\mathcal{V}}z^{\prime}_v(0)\otimes\nabla b_v(x).
	\end{equation}
\end{remark}

The proposition below will allow us to employ the domain perturbation induced
by a $C^1$-perturbation within the framework of Theorem~\ref{thm:sp}.

\begin{proposition}\label{prop:ext}
	There exists an extension
	$\widetilde{T}\colon\mathbb{R}^d\times(-\tau,\tau)\to\mathbb{R}^d$
	of $T$ obtained in Theorem~\ref{thm:Tbi}, that is,
	\[
		\widetilde{T}(x,t)=T(x,t)
		\quad\forall(x,t)\in\overline{P}\times(-\tau,\tau),
	\]
	such that $\widetilde{T}(\cdot,0)=\mathrm{Id}$,
	$ \widetilde{T}(\cdot,t)-\mathrm{Id}
	\in W^{1,\infty}(\mathbb{R}^d;\mathbb{R}^d)$,
	and $t\mapsto\widetilde{T}(\cdot,t)$ is differentiable.
\end{proposition}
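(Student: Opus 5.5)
We extend the displacement $T(\cdot,t)-\mathrm{Id}$ rather than $T$ itself. By Remark~\ref{rem:Tmap},
\[
T(x,t)-x=\sum_{v\in\mathcal{V}}b_v(x)\bigl(z_v(t)-v\bigr),
\]
so it suffices to extend each barycentric coordinate $b_v$ from $\overline{P}$ to a Lipschitz function $\widetilde b_v$ on $\mathbb{R}^d$ that is bounded with bounded gradient. We then set
\[
\widetilde T(x,t)\coloneqq x+\sum_{v\in\mathcal{V}}\widetilde b_v(x)\bigl(z_v(t)-v\bigr).
\]
Since the $t$-dependence separates from the $x$-dependence, all required properties should follow from properties of the finitely many scalar functions $\widetilde b_v$ together with the $C^1$-regularity of $z_v$.

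\textbf{Step 1: extending $b_v$.} By \eqref{estimate-nabla-bv} and convexity, each $b_v$ is Lipschitz on $\overline{P}$ with constant at most $C_{P,d}$. Two constructions are available.
\begin{itemize}
\item Compose with the metric projection $\pi_{\overline P}$ onto the closed convex set $\overline P$. This map is $1$-Lipschitz, so $\widetilde b_v\coloneqq b_v\circ\pi_{\overline P}$ is Lipschitz with constant $C_{P,d}$.
\item Alternatively, use a McShane extension.
\end{itemize}
Either way $\widetilde b_v$ is Lipschitz, but the projection version is also bounded on $\mathbb{R}^d$, since $0\le b_v\le 1$ on $\overline P$. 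Hence $\widetilde b_v\in W^{1,\infty}(\mathbb{R}^d)$ by Rademacher's theorem, with $\|\widetilde b_v\|_{L^\infty}\le1$ and $\|\nabla\widetilde b_v\|_{L^\infty}\le C_{P,d}$. On $\overline P$ we have $\pi_{\overline P}=\mathrm{Id}$, so $\widetilde T=T$ there.

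\textbf{Step 2: the required properties.}
\begin{itemize}
\item $\widetilde T(\cdot,0)=\mathrm{Id}$, because $z_v(0)=v$.
\item $\widetilde T(\cdot,t)-\mathrm{Id}$ is a finite sum of $W^{1,\infty}$ scalar functions multiplied by constant vectors, hence lies in $W^{1,\infty}(\mathbb{R}^d;\mathbb{R}^d)$.
\item For differentiability in $t$, with values in $W^{1,\infty}$, the candidate derivative is $\widetilde\theta\coloneqq\sum_v\widetilde b_v\,z_v'(0)$. Then
\[
\Bigl\|\frac{\widetilde T(\cdot,t)-\mathrm{Id}}{t}-\widetilde\theta\Bigr\|_{W^{1,\infty}}
\le\sum_{v}\|\widetilde b_v\|_{W^{1,\infty}}\,\Bigl|\frac{z_v(t)-v}{t}-z_v'(0)\Bigr|\to0,
\]
using $z_v\in C^1$. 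The same estimate at an arbitrary $t_0\in(-\tau,\tau)$ gives differentiability everywhere, with continuous derivative.
\end{itemize}
This is the sense of differentiability needed in Theorem~\ref{thm:sp}, and its derivative $\widetilde\theta$ coincides with $\theta$ on $\overline P$.

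\textbf{Main obstacle.} The only delicate point is that the extension must be globally bounded: $W^{1,\infty}$ requires $L^\infty$ control, and a plain McShane extension grows linearly. This is why the projection (or a McShane extension truncated to $[0,1]$, which preserves the Lipschitz constant) is preferred.

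A second point needs care: the Lipschitz bound on $\overline P$ is obtained from the gradient bound on the open set $P$ via the mean value theorem along segments, which stay in $P$ by convexity, followed by passage to the closure. This is already noted in the paper and can be invoked directly.

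The proposition does not require $\widetilde T(\cdot,t)$ to be a homeomorphism of $\mathbb{R}^d$, so no global invertibility argument is needed. If one wanted it, it would follow for small $t$ from $\|\nabla(\widetilde T-\mathrm{Id})\|_{L^\infty}\le C_{P,d}\sum_v|z_v(t)-v|<1/2$ by \eqref{eq:zv}.
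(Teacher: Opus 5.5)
Your proof is correct and follows the paper's argument. Like the paper, you extend each $b_v$ to a bounded Lipschitz function $\widetilde b_v$ on $\mathbb{R}^d$ and set $\widetilde T(x,t)=x+\sum_{v\in\mathcal{V}}\widetilde b_v(x)(z_v(t)-v)$. The only differences are these. The paper extends by Kirszbraun's theorem and then truncates with the projection onto $[0,1]$, which is essentially your truncated McShane alternative. Your main choice $b_v\circ\pi_{\overline P}$ instead uses convexity of $P$ to get boundedness and the Lipschitz bound in one step. Your explicit $W^{1,\infty}$-norm estimate also makes precise the differentiability in $t$ that the paper only asserts.
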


\begin{proof}
	Applying Kirszbraun's theorem \cite{Kirszbraun},
	we can extend the barycentric coordinates of $P$ to Lipschitz functions
	$b^{K}_v\colon\mathbb{R}^d\to\mathbb{R}$,
	for each $v\in\mathcal{V}$.
	Let $p_{[0,1]}\colon\mathbb{R}\to [0,1]$ denote the metric projection
	onto the interval $[0,1]$.
	Let $\widetilde{b}_v \coloneqq p_{[0,1]} \circ b^{K}_v$.
	Given $x \in \overline{P}$, we have
	\[
		\widetilde{b}_v (x) = p_{[0,1]} (b^{K}_v (x))
		= p_{[0,1]} (b_v(x)) = b_v(x)
	\]
	since $b^{K}_v$ extends $b_v$ and $0\leq b_v(x) \leq1$,
	and for $x, y \in \mathbb{R}^d$ it holds that
	\[
		|\widetilde{b}_v(x) - \widetilde{b}_v(y)| \leq
		|p_{[0,1]} (b^{K}_v (x)) - p_{[0,1]} (b^{K}_v (y))| \leq
		|b^{K}_v (x) - b^{K}_v (y)|
	\]
	since $p_{[0,1]}$ is Lipschitz.
	Moreover, $0 \leq \widetilde{b}_v(x) \leq 1$ for all $x\in \mathbb{R}^d$.
	It follows that $\widetilde{b}_v$ is a bounded Lipschitz extension of $b_v$.
	Using these extensions, we construct the map
	$\widetilde{T}\colon\mathbb{R}^d\times(-\tau,\tau)\to\mathbb{R}^d$
	defined by
	\[
		\widetilde{T}(x,t)\coloneqq
		x+\sum_{v\in\mathcal{V}}\widetilde{b}_v(x)(z_v(t)-v).
	\]
	Clearly, $\widetilde{T}$ extends $T$.
	Since $z_v(0)=v$ for each $v\in\mathcal{V}$, we have
	$\widetilde{T}(\cdot,0)=\mathrm{Id}$.
	From \eqref{eq:zv},
	\[
		|\widetilde{T}(x,t) - x| \leq
		\sum_{v\in\mathcal{V}} |\widetilde{b}_v(x)| |z_v(t)-v|,
	\]
	and
	\[
		|\widetilde{T}(x,t) - x - (\widetilde{T}(y,t) - y)| \leq
		\sum_{v\in\mathcal{V}}
		|\widetilde{b}_v(x)-\widetilde{b}_v(y)||z_v(t)-v|,
	\]
	it follows that
	$\widetilde{T}(\cdot,t) - \mathrm{Id}$ is bounded and Lipschitz, and hence
	it belongs to $W^{1,\infty}(\mathbb{R}^d;\mathbb{R}^d)$.
	The differentiability is a consequence of the fact that each
	$z_v$ is a $C^1$-function.
\end{proof}

\newpage

\begin{remark}
	While the bi-Lipschitz property of $T(\cdot, t)$ is needed to justify a
	change of variables 
	(as required, for instance, in the computation of shape derivatives),
	the boundary expression \eqref{eq:bdexpr}
	can be directly obtained from the requirement that $T$ must preserve
	the facets of the polyhedral domain.
	Indeed, given a facet $F_k\cap\overline{P}$, if
	\[
		T(x,t) \cdot n_k(t) = a_k(t)
		\quad
		\forall x\in F_k\cap\overline{P},\,\forall t\in(-\tau,\tau),
	\]
	then, differentiating with respect to $t$ and evaluating at $t=0$, we obtain
	\[
	\begin{aligned}
		\partial_t T(x,0)\cdot n_k(0) + T(x,0)\cdot n_k^{\prime}(0) &= a_k^{\prime}(0)\\
		\theta(x)\cdot n_k(0) + x\cdot n_k^{\prime}(0) &= a_k^{\prime}(0)\\
		\theta(x)\cdot \nu &= a_k^{\prime}(0) - x\cdot n_k^{\prime}(0)\quad\forall x\in F_k\cap\overline{P}
	\end{aligned}
	\]
	since $n_k(0)=\nu$ on $F_k\cap\overline{P}$.
	Integrating over all the facets yields \eqref{eq:bdexpr}.
\end{remark}

\section{Optimization problem}\label{sec:opt_prb}

In this section, we present the optimization problem that we solve numerically.
We also show that this problem is equivalent to
the polyhedral Rayleigh--Faber--Krahn inequality.

Consider the shape optimization problem
\begin{equation}\label{eq:min_poly}
	\min_{P\in\mathcal{P}}\lambda(P)
	\quad\text{subject to}\quad |P|=\pi,
\end{equation}
and the Rayleigh--Faber--Krahn inequality for polyhedral domains
\begin{equation}\label{eq:fki_poly}
	P^*\in\mathcal{P}:\quad
	|P^*|^{2/d}\lambda(P^*)
	\leq|P|^{2/d}\lambda(P)
	\quad \forall P\in\mathcal{P}.
\end{equation}
Recall that $\mathcal{P}$ is the class of polytopes
with $|\mathcal{K}|$ facets and dimension $d$.
Moreover, we have assumed that
each polytope in $\mathcal{P}$ is open,
contains the origin, and that its representation as
a polyhedral set is irreducible.

\begin{lemma}\label{lem:equiv}
	Suppose that problems \eqref{eq:min_poly} and \eqref{eq:fki_poly}
	admit solutions.
	Then the two problems are equivalent,
	in the sense that their solutions coincide up to a scaling.
\end{lemma}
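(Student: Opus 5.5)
The argument rests on two facts about the class $\mathcal{P}$ and the eigenvalue under dilations. Let $\sigma_s(x)=sx$ for $s>0$. First, if $P\in\mathcal{P}$, then $sP\in\mathcal{P}$. Indeed, $sP=\bigcap_k\{n_k\cdot x<s a_k\}$ has the same number $|\mathcal{K}|$ of facets, the same dimension $d$, still contains the origin, and its representation stays irreducible because $\sigma_s$ is a linear bijection. Second, we have the scaling laws $|sP|=s^d|P|$ and $\lambda(sP)=s^{-2}\lambda(P)$. The latter follows from the Rayleigh quotient: the map $u\mapsto u\circ\sigma_s^{-1}$ is a bijection $H_0^1(P)\setminus\{0\}\to H_0^1(sP)\setminus\{0\}$, and it multiplies the numerator by $s^{d-2}$ and the denominator by $s^d$. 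Hence the functional $J(P)\coloneqq|P|^{2/d}\lambda(P)$ is scale invariant, $J(sP)=J(P)$. Moreover, every $P\in\mathcal{P}$ has a unique rescaling $s_P P$ with $|s_PP|=\pi$, namely $s_P=(\pi/|P|)^{1/d}$.

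We then prove the two inclusions.

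(a) Let $P^\sharp$ solve \eqref{eq:min_poly}, and let $P\in\mathcal{P}$ be arbitrary. The set $s_PP$ is admissible for \eqref{eq:min_poly}, so $\lambda(P^\sharp)\le\lambda(s_PP)$. Multiplying by $\pi^{2/d}=|P^\sharp|^{2/d}=|s_PP|^{2/d}$ gives $J(P^\sharp)\le J(s_PP)=J(P)$. Hence $P^\sharp$ solves \eqref{eq:fki_poly}.

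(b) Conversely, let $P^*$ solve \eqref{eq:fki_poly}. By scale invariance, every dilate $sP^*$ is also a solution, and in particular $s_{P^*}P^*$ is one. For any admissible $P$ with $|P|=\pi$ we get $\pi^{2/d}\lambda(s_{P^*}P^*)=J(P^*)\le J(P)=\pi^{2/d}\lambda(P)$. Thus $s_{P^*}P^*$ solves \eqref{eq:min_poly}. Combined with (a), the solution sets are related exactly by $P\mapsto s_PP$: the solutions of \eqref{eq:fki_poly} are precisely the dilates of the solutions of \eqref{eq:min_poly}. This is the claimed equivalence up to scaling.

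The computations themselves are routine. The only step needing care is checking that dilation keeps us inside $\mathcal{P}$ with all the normalizations imposed there (open, containing the origin, irreducible, $|\mathcal{K}|$ facets). Here we use that $s>0$, so that $a_k>0$ is preserved. Since the origin is fixed by dilations, no translation is needed. We also use the standing existence hypothesis of the lemma, which is what makes the minima meaningful in both directions.
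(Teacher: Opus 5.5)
Your proof is correct and follows the paper's argument: in both directions you rescale by $(\pi/|P|)^{1/d}$ to volume $\pi$ and use $\lambda(sP)=s^{-2}\lambda(P)$, just as the paper does. You also make explicit two points the paper uses without comment, namely that $\mathcal{P}$ is closed under dilations and how the Rayleigh quotient gives the scaling law.
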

\begin{proof}
	Let $P^{\min}$ be a minimizer of \eqref{eq:min_poly}.
	For $P\in\mathcal{P}$, define the scaled copy
	\[
		P_{\pi}\coloneqq(\pi^{1/d}|P|^{-1/d})P.
	\]
	Then $|P_{\pi}|= (\pi^{1/d}|P|^{-1/d})^{d}|P|=\pi$.
	The minimality of $P^{\min}$ yields
	\[
		\lambda(P^{\min})\leq\lambda(P_{\pi})
		=(\pi^{1/d}|P|^{-1/d})^{-2}\lambda(P),
	\]
	and hence
	\[
		|\pi|^{2/d}\lambda(P^{\min})\leq
		|P|^{2/d}\lambda(P),
	\]
	which proves that $P^{\min}$ is a solution to \eqref{eq:fki_poly}.

	\noindent Conversely, let $P^*$ be a solution to \eqref{eq:fki_poly}.
	For any $P\in\mathcal{P}$ with $|P|=\pi$ it holds that
	\[
		|P^*|^{2/d}\lambda(P^*) \leq \pi^{2/d}\lambda(P),
	\]
	which implies
	\[
		\lambda(P)\geq
		(\pi^{1/d}|P^*|^{-1/d})^{-2}\lambda(P^*)=
		\lambda((\pi^{1/d}|P^*|^{-1/d})P^*).
	\]
	Therefore, $P^*_\pi$ is a minimizer of \eqref{eq:min_poly}.
\end{proof}

We now reformulate \eqref{eq:min_poly} as a
finite-dimensional optimization problem
by using polyhedral sets.

Let $a\colon\mathbb{R}\to(0,\infty)$ be a bijective $C^1$-function.
Let $n\colon\mathbb{R}^{d-1}\to\mathbb{S}^{d-1}$ be
the hyperspherical coordinate map of $\mathbb{S}^{d-1}$, that is,
\begin{equation}\label{eq:hyp_coor}
	n(\gamma)=
	\begin{pmatrix}
		\cos(\gamma^1) \\
		\sin(\gamma^1)\cos(\gamma^2) \\
		\sin(\gamma^1)\sin(\gamma^2)\cos(\gamma^3) \\
		\vdots \\
		\sin(\gamma^1)\sin(\gamma^2)\cdots\sin(\gamma^{d-2})\cos(\gamma^{d-1})\\
		\sin(\gamma^1)\sin(\gamma^2)\cdots\sin(\gamma^{d-2})\sin(\gamma^{d-1})
	\end{pmatrix}
\end{equation}
with $\gamma=(\gamma^1,\ldots,\gamma^{d-1})$.
Define the parameter space
\begin{equation}\label{eq:Gr}
	\Gamma\coloneqq
	\left\{r\in\mathbb{R}^{d|\mathcal{K}|}\mid P(r)\in\mathcal{P}\right\},
\end{equation}
where the elements of $\Gamma$ have the form
\[
	r=(\gamma_1,\mu_1,\ldots,\gamma_{|\mathcal{K}|},\mu_{|\mathcal{K}|})
\]
with $\gamma_k=(\gamma_k^1,\ldots,\gamma_k^{d-1})\in\mathbb{R}^{d-1}$ 
and
$\mu_k\in\mathbb{R}$ for each $k\in\mathcal{K}$,
and $P(r)$ is the polyhedral set given by
\begin{equation}\label{eq:Pr}
	P(r)\coloneqq
	\bigcap_{k\in\mathcal{K}}
	\left\{x\in\mathbb{R}^d \mid n(\gamma_k)\cdot x < a(\mu_k)\right\}.
\end{equation}
Here, we have assumed $\mathcal{K}=\{1,2,\ldots,|\mathcal{K}|\}$. 
Each $\gamma_k$ is a vector of angles corresponding to the normal vector $n(\gamma_k)$.
Since $a$ is bijective, the distances to the origin $a(\mu_k)$ are parametrized over $\mathbb{R}$.
In this way, a polyhedral set can be represented by its coordinates in the parameter space $\Gamma$.

With this construction, we write \eqref{eq:min_poly} as
\begin{equation}\label{eq:min_par}
	\min_{r\in\Gamma} \lambda(P(r))\quad\text{subject to}\quad |P(r)|=\pi,
\end{equation}
which is the minimization problem that we solve numerically for $d=3$.

\section{Numerical method}\label{sec:num_mtd}

We apply the Lagrangian method \cite{MR4589221} to solve \eqref{eq:min_par}.
Let $\mathscr{L}\colon\Gamma\times\mathbb{R}^3\to\mathbb{R}$
be the Lagrangian functional defined by
\[
	\mathscr{L}(r,s,l,m)\coloneqq
	\lambda(P(r))+(l,m)\cdot(|P(r)|-\pi-s,s)+
	\frac{p}{2}s^2-\frac{q}{2}(l-m)^2,
\]
where $P(r)$ is given by \eqref{eq:Pr}, $l$ and $m$ are Lagrange multipliers,
$s$ is a slack variable, and $p,q>0$ are penalty parameters.
The algorithm proposed in \cite{MR4589221} applied to \eqref{eq:min_par}
takes the form of Algorithm~\ref{Lag-algo}.

\begin{algorithm}
	\caption{Lagrangian method}\label{Lag-algo}
	\refstepcounter{equation}\label{eq:algo}
	\KwIn{
		$\eta>0$ sufficiently large, $\sigma\in(0,1)$,
		$p\in(1,\infty)$, $q\in(0,1)$.
	}
	Set $(r^{0},s^{0},l^{0},m^{0})$ and $g^0\in(0,1]$.\\
	\For{$i = 0,1,2,\ldots$}{
		$r^{i+1}=r^{i}-\dfrac{1}{\eta}\nabla_{r}\mathscr{L}(r^i,s^i,l^i,m^i)$\\
		$m^{i+1}=m^i+g^i\dfrac{l^i-m^i}{(l^{i}-m^{i})^{2}+1}$
		\hfill (\theequation)\\
		$l^{i+1}=m^{i+1}+\dfrac{p}{1+pq}\left(|P(r^{i+1})|-\pi\right)$\\
		$s^{i+1}=\dfrac{1}{p}(l^{i+1}-m^{i+1})$\\
		$g^{i+1}=\sigma g^{i}$
	}
\end{algorithm}

Algorithm~\ref{Lag-algo} generates a sequence
$(r^{i},s^{i},l^{i},m^{i})_{i\in\mathbb{N}}$ which is expected to converge
to a limit point $(\bar{r},\bar{s},\bar{l},\bar{m})$, in which
$(\bar{r}, \bar{l})$ is a KKT point of problem \eqref{eq:min_par},
$\bar{s}=0$, and $\bar{l}=\bar{m}$.
Thus, a sequence of polyhedral domains
$\left(P(r^i)\right)_{i\in\mathbb{N}}$ is obtained,
which is expected to converge, at least numerically,
to a solution of \eqref{eq:polyFK}.
In this work, we restrict ourselves to a formal setting and
do not address the verification of the assumptions in \cite{MR4589221}
required for the convergence analysis of Algorithm~\ref{Lag-algo}.

\subsection{Gradient computation}

In order to implement Algorithm~\ref{Lag-algo}, we compute
the gradient of the Lagrangian functional $\mathscr{L}$ with respect to $r$.
This is precisely where the shape derivatives of
first Dirichlet eigenvalue of the Laplacian and the volume,
together with the boundary expression \eqref{eq:bdexpr}, are used.
Let
\[
	\begin{aligned}
	r &=
	\left(
		\gamma_1,\mu_1,\ldots,\gamma_{|\mathcal{K}|},\mu_{|\mathcal{K}|}
	\right)\in\Gamma,\\
	\delta r &=
	\left(
		\delta\gamma_1,\delta\mu_1,\ldots,
		\delta\gamma_{|\mathcal{K}|},\delta\mu_{|\mathcal{K}|}
	\right)\in\mathbb{R}^{d|\mathcal{K}|},
	\end{aligned}
\]
where $\Gamma$ is the set of coordinates \eqref{eq:Gr}.
The directional derivative of $\mathscr{L}$ at $r$ in the direction $\delta r$
is given by
\begin{equation}
	\begin{aligned}
		\nabla_r\mathscr{L}(r,s,l,m)\cdot{\delta r}
		&=\left.\frac{d}{dt}\mathscr{L}(r+t{\delta r},s,l,m)\right|_{t=0}\\
		&=\left.\frac{d}{dt}\lambda(P(r+t{\delta r}))\right|_{t=0}
		+l\left.\frac{d}{dt}|P(r+t{\delta r})|\right|_{t=0}.
	\end{aligned}	
\end{equation}
For each $k\in\mathcal{K}$, define the functions
$n_k\colon\mathbb{R}\to\mathbb{S}^{d-1}$ and
$a_k\colon\mathbb{R}\to(0,\infty)$ by
\[
	n_k(t)\coloneqq n(\gamma_k+t\delta\gamma_k)
	\quad\text{and}\quad
	a_k(t)\coloneqq a(\mu_k+t\delta\mu_k),
\]
where $n$ is the hyperspherical coordinate map \eqref{eq:hyp_coor} and
$a$ is the bijective $C^1$-function used in the construction of $\Gamma$.
Note that $\{(n_k,a_k)\}_{k\in\mathcal{K}}$ determines a
\mbox{$C^1$-perturbation} $\{P_t\}_{t\in\mathbb{R}}$ of $P(r)$.
It follows from Definition~\ref{def3.3} and \eqref{eq:Pr} that
\[
	P_t = P(r+t\delta r)\quad\text{for }t\in\mathbb{R},
\]
and then
\begin{equation}\label{eq:L1}
	\nabla_r\mathscr{L}(r,s,l,m)\cdot{\delta r}=
	\left.\frac{d}{dt}\lambda(P_t)\right|_{t=0}
	+l\left.\frac{d}{dt}|P_t|\right|_{t=0}.	
\end{equation}
In general, for $t\neq0$, the set $P(r+t\delta r)$
may not belong to $\mathcal{P}$:
it may be unbounded or its representation as a polyhedral set may be reducible.
However, it was proved in \cite[Lem.~6]{MR5011129} that
for $t$ sufficiently small, \mbox{$C^1$-perturbations} of
irreducible bounded polyhedral sets remain irreducible and bounded.
Thus, for $t$ sufficiently small, $P(r+t\delta r)\in\mathcal{P}$.

We now introduce an important assumption that allows us to carry out the subsequent analysis.
\begin{assumption}\label{assump:simple}
	$P(r)$ is simple, i.e.,
	each vertex of $P(r)$ belongs to exactly $d$ facets.
\end{assumption}

Under Assumption~\ref{assump:simple}, we can apply Theorem~\ref{thm:Tbi}
to the polyhedral set $P(r)$ and the \mbox{$C^1$-perturbation} of $P(r)$ defined above:
there exist $\tau>0$ and a map
$T:\overline{P(r)}\times(-\tau,\tau)\to\mathbb{R}^{d}$
such that
\[
	T(P(r),t)=P_t \quad \text{for } t\in(-\tau,\tau).
\]
We now define $\Psi_t\coloneqq \widetilde{T}(\cdot,t)$ for $t\in[0,\tau)$,
where $\widetilde{T}$ is the extension of $T$ obtained in Proposition~\ref{prop:ext}.
Thus, we have
\[
	P(r+t\delta r) = P_t = T(P(r),t) = \Psi_t(P(r)).
\]
Since $\{\Psi_t\}_{t\in[0,\tau)}$ satisfies the assumptions of Theorem~\ref{thm:sp},
it follows from \eqref{eq:L1}, \eqref{eq:sp_vol}, and \eqref{eq:sp_eig} that
\[
	\nabla_r\mathscr{L}(r,s,l,m)\cdot{\delta r}
	=\int_{\partial P(r)}
	\left(l-(\partial_{\nu}u_{P(r)})^{2}\right)\theta\cdot\nu,
\]
where $u_{P(r)}$ is the $L^2$-normalized
eigenfunction associated to $\lambda(P(r))$,
$\theta\coloneqq\partial_{t}T(\cdot,0)$,
and $\nu$ is the outward unit normal vector to $\partial P(r)$.
Finally, employing the boundary expression \eqref{eq:bdexpr} with
$\phi=l-(\partial_{\nu}u_{P(r)})^{2}$ we obtain
\[
	\nabla_r\mathscr{L}(r,s,l,m)\cdot{\delta r}
	=\left(
		\begin{array}{c}
		\mathcal{G}_1 \\ 
		\mathcal{M}_1 \\
		\vdots \\
		\mathcal{G}_{|\mathcal{K}|} \\  
		\mathcal{M}_{|\mathcal{K}|}
		\end{array}
		\right) \cdot \delta r,
\]
where $\mathcal{G}_k\in\mathbb{R}^{d-1}$ and $\mathcal{M}_k\in\mathbb{R}$ are given by
\[
	\mathcal{G}_k=-\left(Dn(\gamma_{k})\right)^\top
	\begin{pmatrix}
		\displaystyle\int_{F_k\cap\overline{P(r)}}\left(l-(\partial_{\nu}u_{P(r)})^{2}\right)\,x_{1}\\
		\vdots\\
		\displaystyle\int_{F_k\cap\overline{P(r)}}\left(l-(\partial_{\nu}u_{P(r)})^{2}\right)\,x_{d}
	\end{pmatrix}
\]
and
\[
	\mathcal{M}_k= a'(\mu_k)\int_{F_k\cap\overline{P(r)}}\left(l-(\partial_{\nu}u_{P(r)})^{2}\right).
\]
Here, $Dn(\gamma_k)$ is the Jacobian matrix of the hyperspherical
coordinate map $n$ evaluated at $\gamma_k$
and $\{F_k\}_{k\in\mathcal{K}}$ are the hyperplanes \eqref{eq:fc_def}.
Since $\delta r$ was arbitrary,
we have obtained the expression of the gradient of $\mathscr{L}$ with respect to $r$.

In summary, prior to each iteration $i$ of Algorithm~\ref{Lag-algo}, we
\begin{enumerate}[label=(\Roman*)]
	\item construct the domain $P(r^{i})$ (with $r^i$ obtained in the previous iteration);
	\item compute the eigenvalue $\lambda(P(r^{i}))$ and
	the eigenfunction $u_{P(r^{i})}$ by solving \eqref{eq:eig_pb} with $\Omega= P(r^{i})$;
	\item obtain the gradient $\nabla_{r}\mathscr{L}(r^{i},s^{i},l^{i},m^{i})$ computing
	$\mathcal{G}_k$ and $\mathcal{M}_k$ for $k\in\mathcal{K}$.
\end{enumerate}
In step (I), it is assumed that $P(r^{i})\in\mathcal{P}$ and
that, according to Assumption \ref{assump:simple}, it is simple.

\section{Numerical results}\label{sec:num}

We perform numerical tests to solve the minimization problem \eqref{eq:min_par}
with $\Gamma$ parametrizing polytopes with $|\mathcal{K}|$ facets,
as shown in Section \ref{sec:num_mtd}.
We consider the cases $|\mathcal{K}|= 5,\ldots,14$.

The code was written in \texttt{Python}.
The eigenvalue problem \eqref{eq:eig_pb} was solved using
\texttt{FEniCSx~0.9}~\cite{fenicsx} and the \texttt{SLEPc} library via its
\texttt{Python} interface \texttt{slepc4py}~\cite{Hernandez:2005:SSF}. 
The polyhedral meshes were constructed using 
\texttt{Gmsh}~\cite{geuzaine2009gmsh}, employing tetrahedral finite elements.
For simplicity, no stopping criterion was implemented.
Instead, each numerical experiment was run for a fixed number
of $1500$ iterations, which proved sufficient to obtain satisfactory results in all cases.
This produced the sequence of polytopes
\[
	P^i \coloneqq P(r^i),\quad r^i\in\Gamma,
	\quad\text{for }i=0,\ldots,1500.
\]
We denote the last iterate by $P^* \coloneqq P(r^{1500})$.
The effectiveness of this strategy is illustrated by the monotone decrease of
$\lambda(P^i)|P^i|^{2/3}$ throughout the iterations in all cases.

The parameter $\eta$ in Algorithm~\ref{Lag-algo} starts with $\eta=25$ and
is increased following the rule $\eta=\eta+25$ at the iterations $i=500$, $900$, $1200$, and $1400$.
For computational efficiency, we also adopt a mesh refinement strategy, where
the number of finite elements is increased
by reducing the mesh parameter in the \texttt{Gmsh} configuration at these same iterations $i=500$, $900$, $1200$, and $1400$.
This results in jumps observed in the convergence history, see for instance  Figure~\ref{fig:p04}.
The remaining parameters in Algorithm~\ref{Lag-algo} take the following fixed values,
which are the same as in \cite{MR4589221}:
$(g^0,\sigma) = (0.5, 0.999)$, $(p,q) = (2000,0.5)$, and $s^0=l^0=m^0=0$.
The initial guesses $P(r^0)$ were truncated tetrahedra and cubes,
from which $r^0$ was obtained by inverting
the hyperspherical coordinate map \eqref{eq:hyp_coor}
and the distance to the origin function $a\colon\mathbb{R}\to(0,\infty)$,
which is chosen as
\[
	a(\mu)\coloneqq
	a_{\min}+(a_{\max}-a_{\min})\left(\frac{\exp\mu}{1+\exp\mu}\right),
\]
with $a_{\min} = 0.1$ and $a_{\max}=2.0$. This is a scaled sigmoid function
that avoids very small or large values
of the distances of the facets to the origin. Although it is not a bijective
function from $\mathbb{R}$ onto $(0,\infty)$, it works very well in practice because, under the given volume constraint, the distances
remain within the interval $(a_{\min}, a_{\max})$ during the iterations.

A summary of the numerical results is provided in Table~\ref{tab:results}.
In Figures~\ref{fig:p05}--\ref{fig:p14} are the three-dimensional polyhedra and
the values of $\lambda(P^i)|P^i|^{2/3}$ along the iterations,
for each test. The norm of $\nabla_{r}\mathscr{L}(r^i,s^i,l^i,m^i)$
and the constraint error $||P^i|-\pi|$ achieved values smaller that
$0.05$ and $0.005$, respectively, in all cases; see Figure~\ref{fig:gaps} for the results when $|\mathcal{K}|=14$.
The eigenvalues associated with the optimal shapes decrease as
the number of facets increases, while remaining strictly greater
than the eigenvalue of the ball, see Figure~\ref{fig:comparison}.
This monotonicity is similar to the case of polygons in two dimensions. 
Indeed, in \cite{MR2264470}  it was observed that
the first Dirichlet eigenvalues are monotonically decreasing with respect
to the number of polygon edges in numerical experiments and the monotonicity
was proved in  \cite{MR4742089} when the number of edges goes to infinity,
using an asymptotic expansion.

We now provide comments on the numerical solutions.
The tests with $|\mathcal{K}|=5$ and $|\mathcal{K}|=7$ facets yield prisms,
namely triangular and pentagonal prisms, respectively.
Platonic solids arise in the tests with $|\mathcal{K}|=4$, $|\mathcal{K}|=6$, and $|\mathcal{K}|=12$ facets,
visually resembling the tetrahedron, the cube, and the dodecahedron.

The optimal polyhedra with $|\mathcal{K}|=4, 5,6,7,8,9,10,12$ facets 
coincide with those obtained for the polyhedral version of the Saint--Venant
inequality in \cite{MR5011129},
where the maximization of the torsion rigidity
on polyhedral domains is investigated.
The test with $|\mathcal{K}|=14$ facets leads to
a particularly remarkable result:
the obtained polyhedron is isomorphic to the \emph{tetrakaidecahedron}
(also known as the truncated octahedron),
which appears in many contexts such as space-filling tessellations and
models of cellular structures \cite{d1992growth, patterns}.
To the best of our knowledge, the optimal polyhedral domains produced by
the tests with $|\mathcal{K}|=8, 9, 10, 11, 13$ facets
do not appear to correspond to any previously named polyhedra.

We observe that edge collapses and vertex mergers occur during the iterations
in most of the tests.
This process consists of a transition from one edge to another,
which does not correspond to a continuous transformation of the vertices.
Consequently, the polyhedra do not remain isomorphic throughout the iterations,
although they preserve their simple property.
This behavior was also reported in \cite{MR5011129}.

\begin{table}
\centering
\begin{tabular}{cccc}
\toprule
$\left|\mathcal{K}\right|$ & Max. FEs & $\lambda(P^{*})\,|P^{*}|^{2/3}$ & $P^{0}$\\
\midrule
4  & 266581 & 36.40775 & irregular tetrahedron \\
5  & 236770 & 32.69062 & tetrahedron with one vertex truncated \\
6  & 288441 & 29.68291 & tetrahedron with two vertices truncated \\
7  & 279845 & 28.85569 & tetrahedron with three vertices truncated \\
8  & 280577 & 28.20829 & cube with two vertices truncated \\
9  & 281825 & 27.67099 & cube with three vertices truncated \\
10 & 266290 & 27.33698 & cube with four vertices truncated \\
11 & 273031 & 27.12258 & cube with five vertices truncated \\
12 & 276151 & 26.83699 & cube with six vertices truncated \\
13 & 276047 & 26.74614 & cube with seven vertices truncated \\
14 & 282844 & 26.66541 & cube with all its vertices truncated \\
\bottomrule
\end{tabular}
\caption{
	Numerical results for different polyhedral configurations.
	The first column lists the number of facets.
	The second column shows the maximum number of finite elements used.
	The values of $\lambda(P^{*})\,|P^{*}|^{2/3}$ were
	obtained at the last iteration;
	recall that $\lambda(B)\,|B|^{2/3}=25.64635$,
	for the Euclidean ball $B$. The initial guesses $P^0$
	are given in the fourth column. 
}
\label{tab:results}
\end{table}

\begin{figure}
	\centering
	\includegraphics[width=\textwidth]{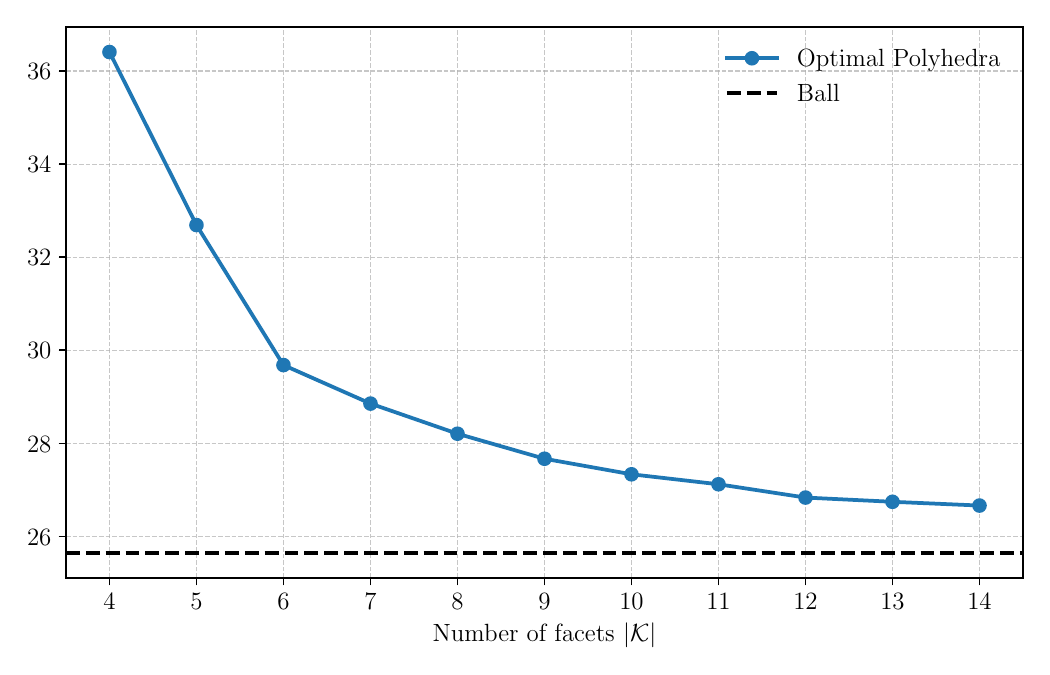}
	\caption{
		The values of $\lambda(P^{*})\,|P^{*}|^{2/3}$ 
		decrease as the number of facets increases, while remaining
		greater than the corresponding value of $\lambda(B)\,|B|^{2/3}$
		for the Euclidean ball. 
	}\label{fig:comparison}
\end{figure}

\section{Conclusion}

In this work, we have presented numerical results for the polyhedral
Rayleigh--Faber--Krahn inequality in three dimensions.
The shape optimization problem of minimizing the
first Dirichlet eigenvalue of the Laplacian over
polyhedral domains with a fixed number of facets was reformulated
as a finite-dimensional optimization problem,
and a first-order Lagrangian scheme was implemented
to compute optimal solutions.
Sensitivity analysis was carried out using
shape derivatives on polyhedral domains.

The results recover well-known polyhedra
(isomorphic to prisms, Platonic solids, and the tetrakaidecahedron),
as well as new non-regular polyhedra exhibiting symmetries
that are not evident a priori.
We conjecture that the minimizers of the Dirichlet eigenvalue
among polyhedra belong to the class of simple polytopes.

Possible directions for future research include the numerical investigation
of other spectral inequalities for polyhedra, such as the
Bossel--Daners and Szeg{\"o}--Weinberger inequalities,
corresponding to the Robin and Neumann problems, respectively.

\begin{figure}
	\centering
	\includegraphics[width=0.71\textwidth]{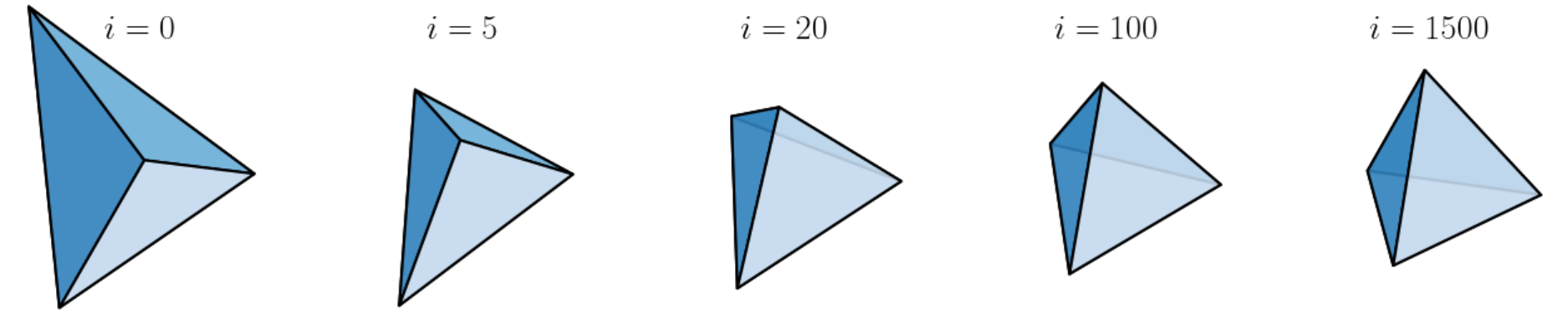}
	\vspace{0.1cm}
	\includegraphics[width=0.33\textwidth]{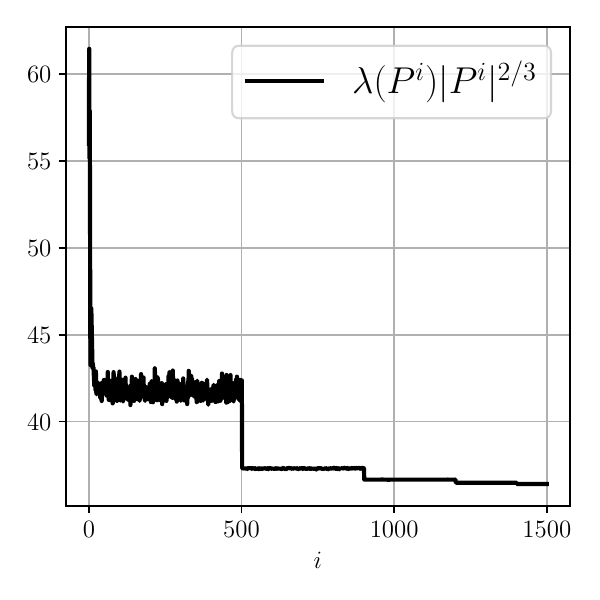}
	\hspace{0.2cm}
	\includegraphics[width=0.33\textwidth]{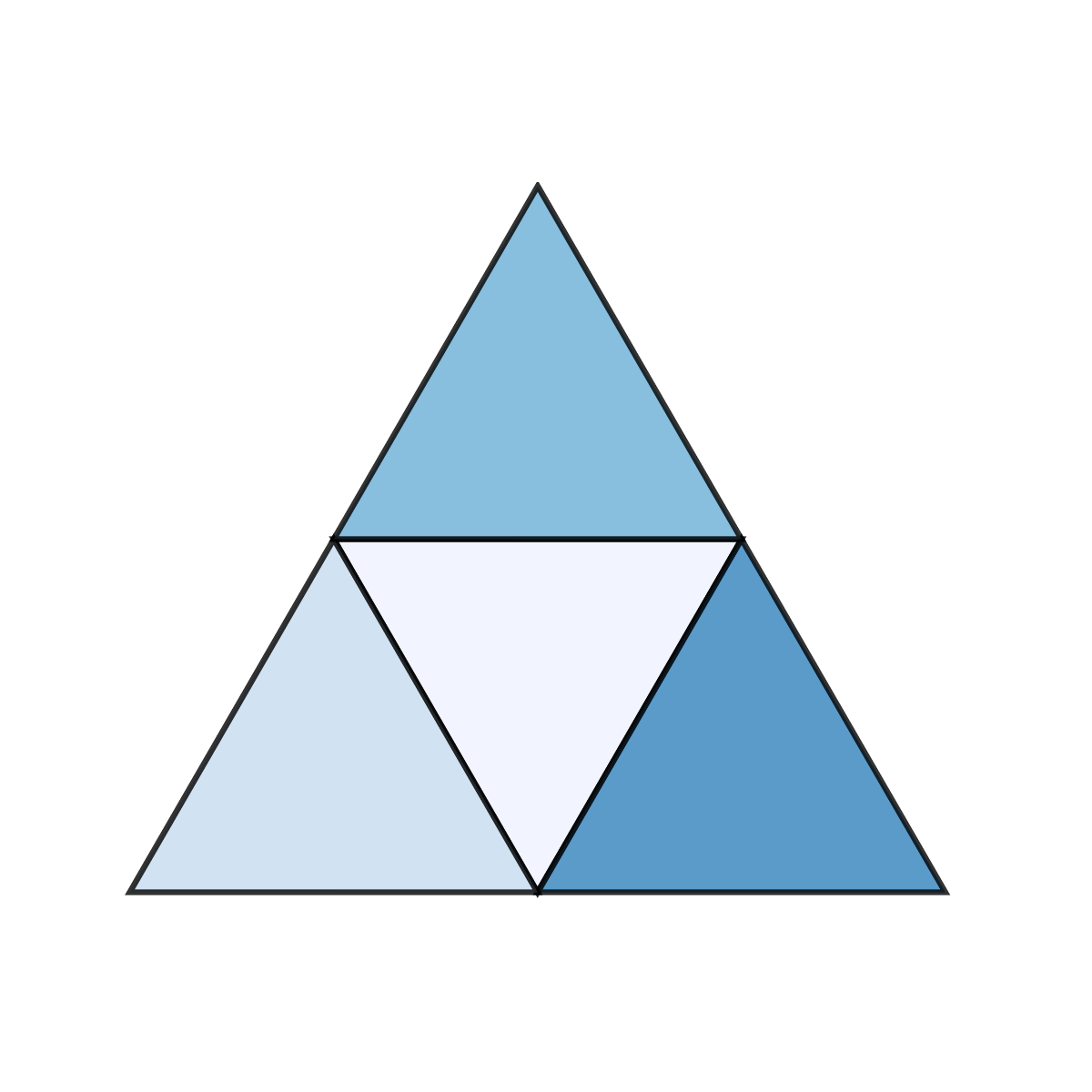}
	\caption{
		Case $\left|\mathcal{K}\right| = 4$.
		(Top) Polyhedra corresponding to iterations $i=0,5,20,100$, and $1500$. 
		(Bottom) Values of $\lambda(P^i)\,|P^i|^{2/3}$ and
		the unfolding of the optimal polyhedron $P^{*}$.
		The initial polyhedron $P^{0}$ was an irregular tetrahedron.
	}
	\label{fig:p04}
\end{figure}

\begin{figure}
	\centering
	\includegraphics[width=0.71\textwidth]{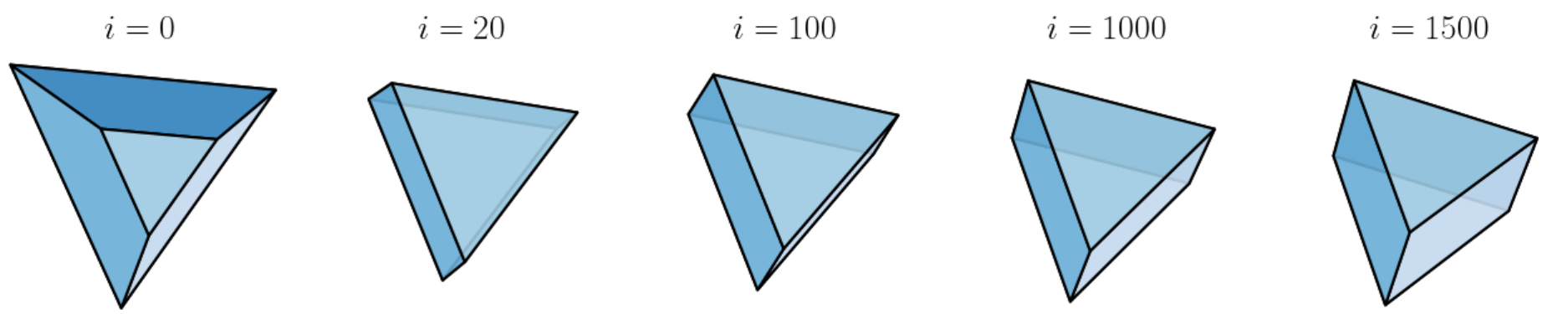}
	\vspace{0.1cm}
	\includegraphics[width=0.33\textwidth]{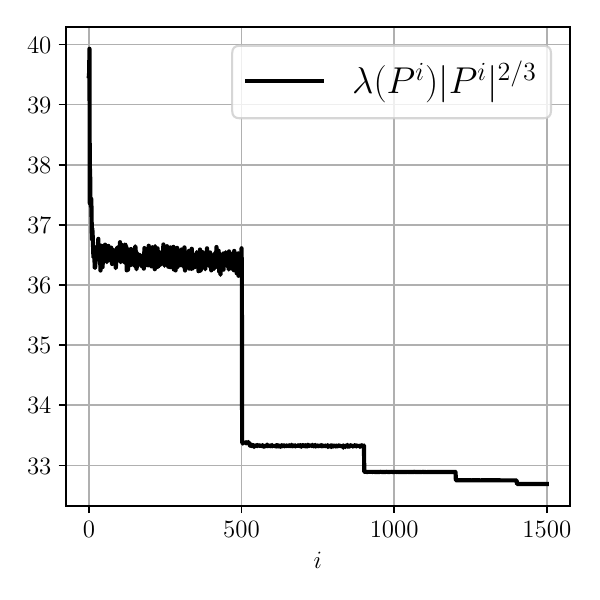}
	\hspace{0.2cm}
	\includegraphics[width=0.33\textwidth]{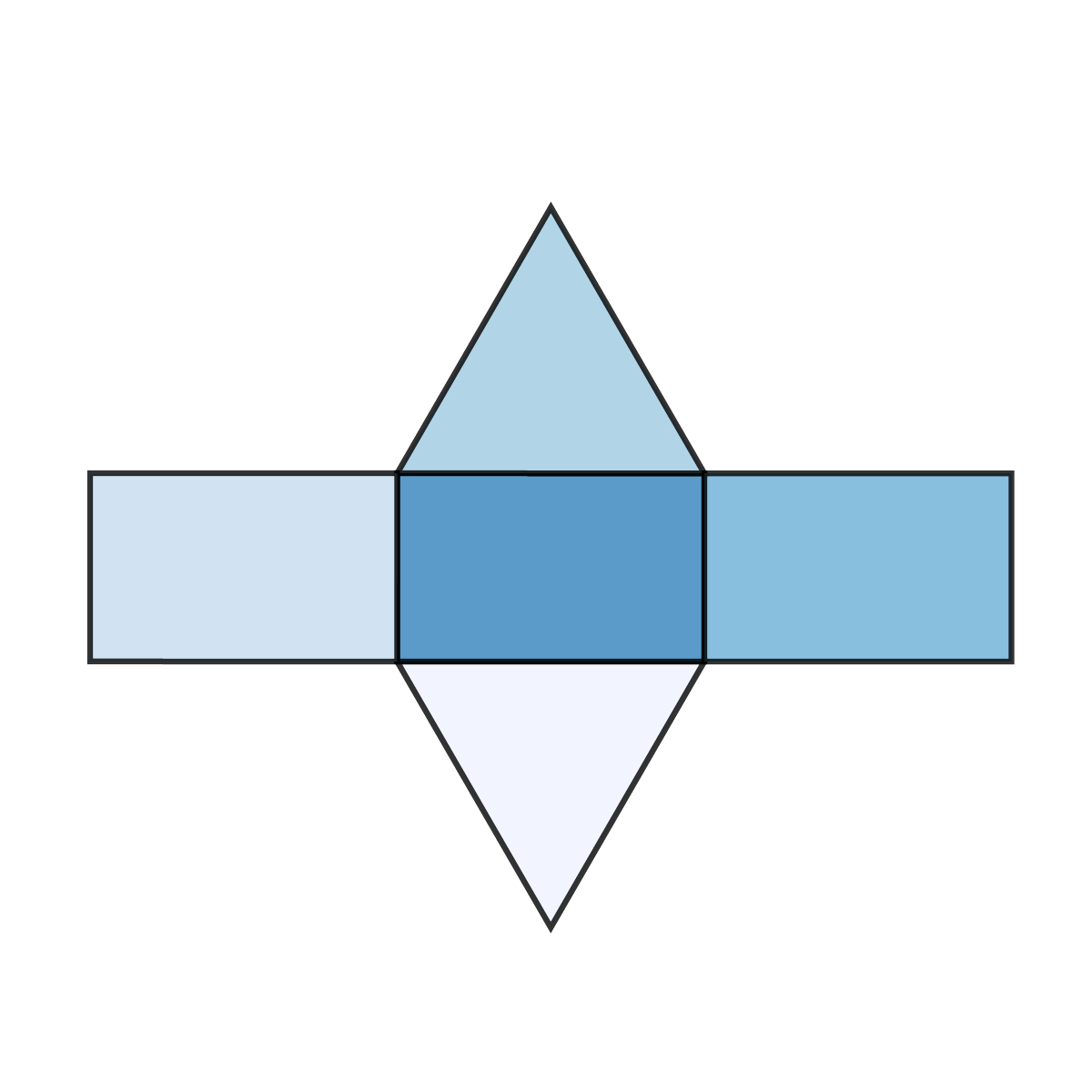}
	\caption{
		Case $\left|\mathcal{K}\right| = 5$.
		(Top) Polyhedra corresponding to iterations $i=0,20,100,1000$, and $1500$.
		(Bottom) Values of $\lambda(P^i)\,|P^i|^{2/3}$ and
		the unfolding of the optimal polyhedron $P^{*}$.
		The initial polyhedron $P^{0}$ was
		a tetrahedron with one truncated vertex.
		All polyhedra $P^{i}$ were isomorphic to a triangular prism.
	}
	\label{fig:p05}
\end{figure}

\begin{figure}
	\centering
	\includegraphics[width=0.71\textwidth]{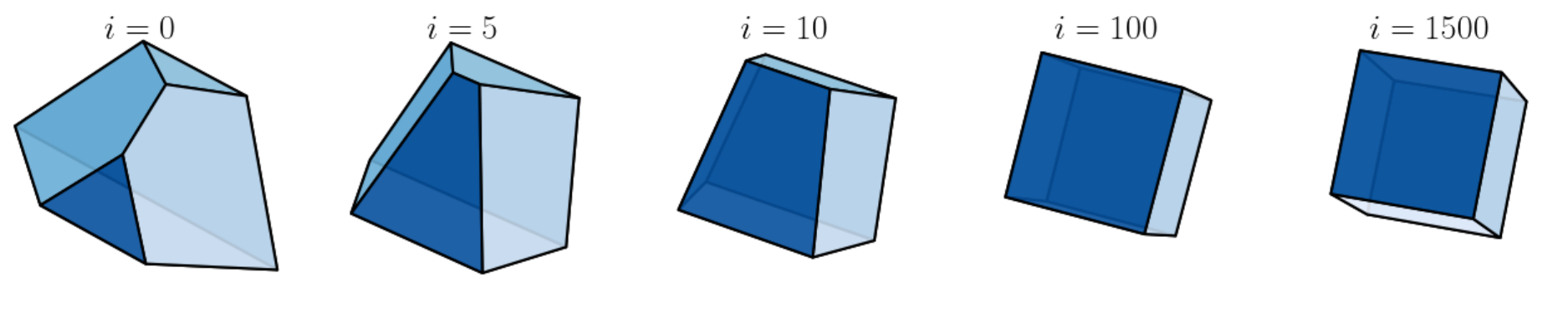}
	\vspace{0.1cm}
	\includegraphics[width=0.33\textwidth]{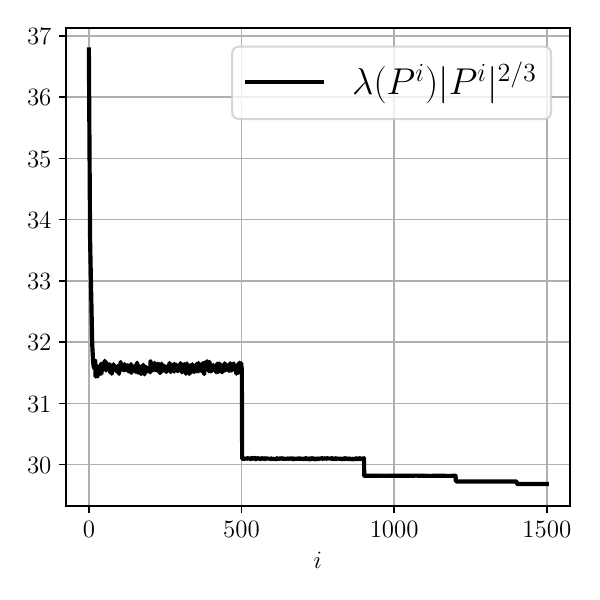}
	\hspace{0.2cm}
	\includegraphics[width=0.33\textwidth]{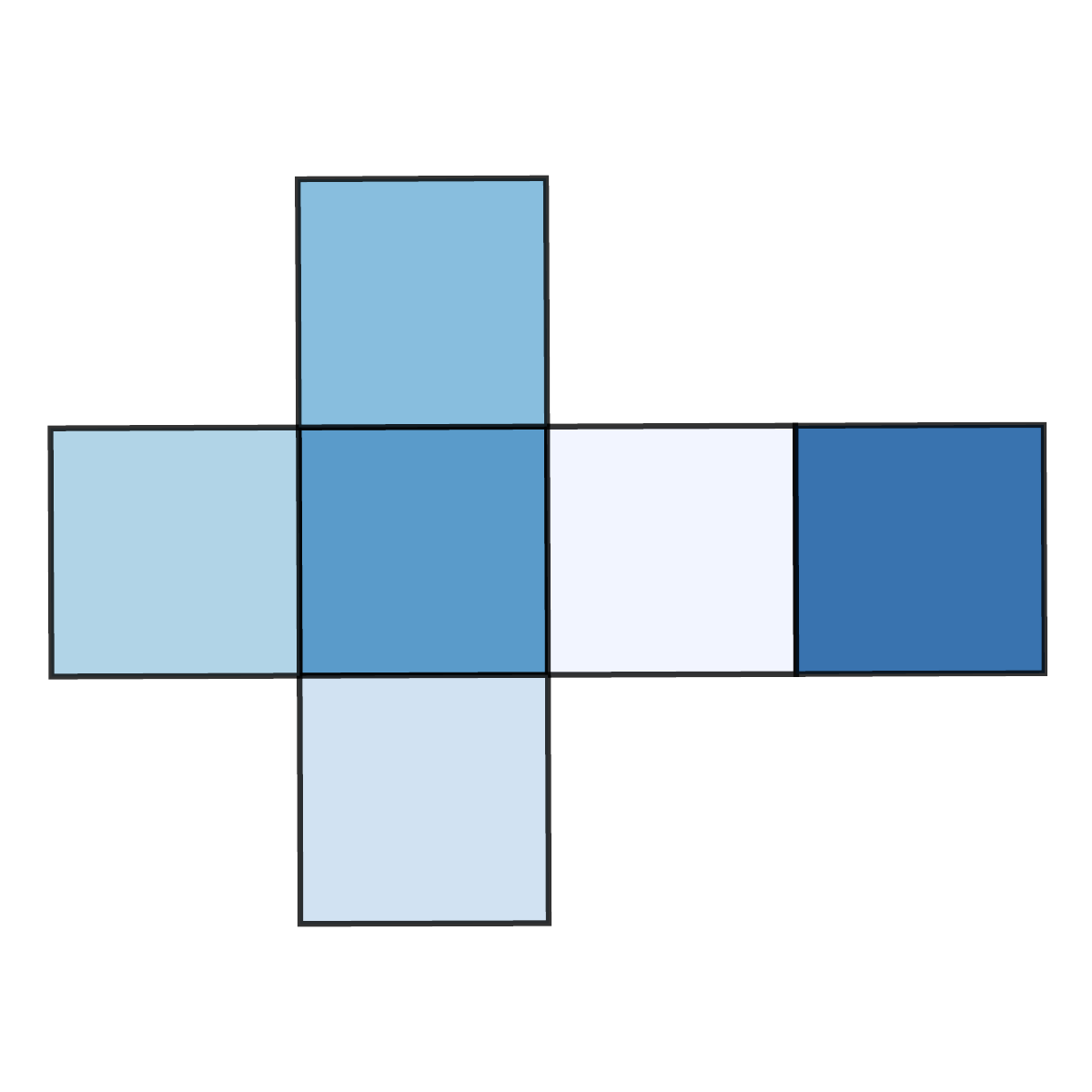}
	\caption{
		Case $\left|\mathcal{K}\right| = 6$.
		(Top) Polyhedra corresponding to iterations $i=0,5,10,100$, and $1500$.
		(Bottom) Values of $\lambda(P^i)\,|P^i|^{2/3}$ and
		the unfolding of the optimal polyhedron $P^{*}$.
		The initial polyhedron $P^{0}$ was
		a tetrahedron with two truncated vertices.
		All polyhedra $P^{i}$ for $i \geq 5$ were isomorphic to a cube.
		The initial and final polyhedra are not isomorphic.
	}
	\label{fig:p06}
\end{figure}

\begin{figure}
	\centering
	\includegraphics[width=0.71\textwidth]{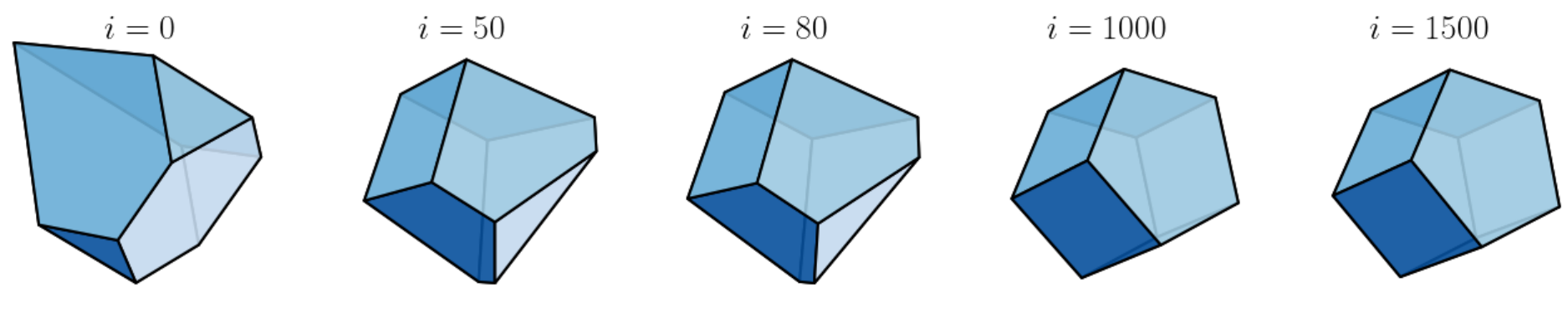}
	\vspace{0.1cm}
	\includegraphics[width=0.33\textwidth]{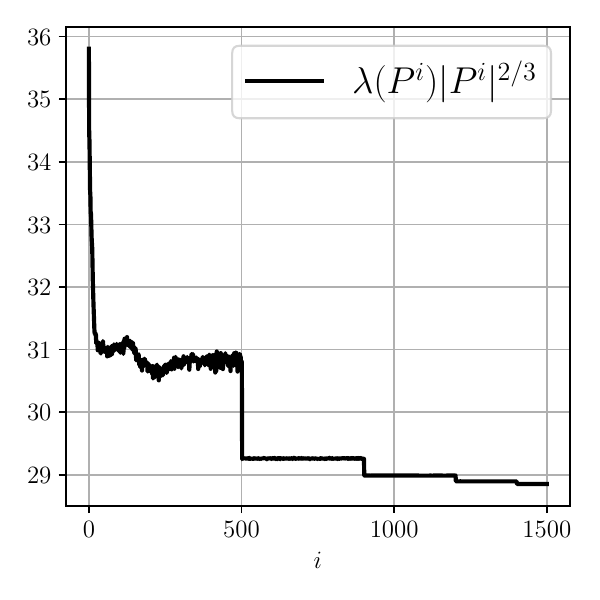}
	\hspace{0.2cm}
	\includegraphics[width=0.33\textwidth]{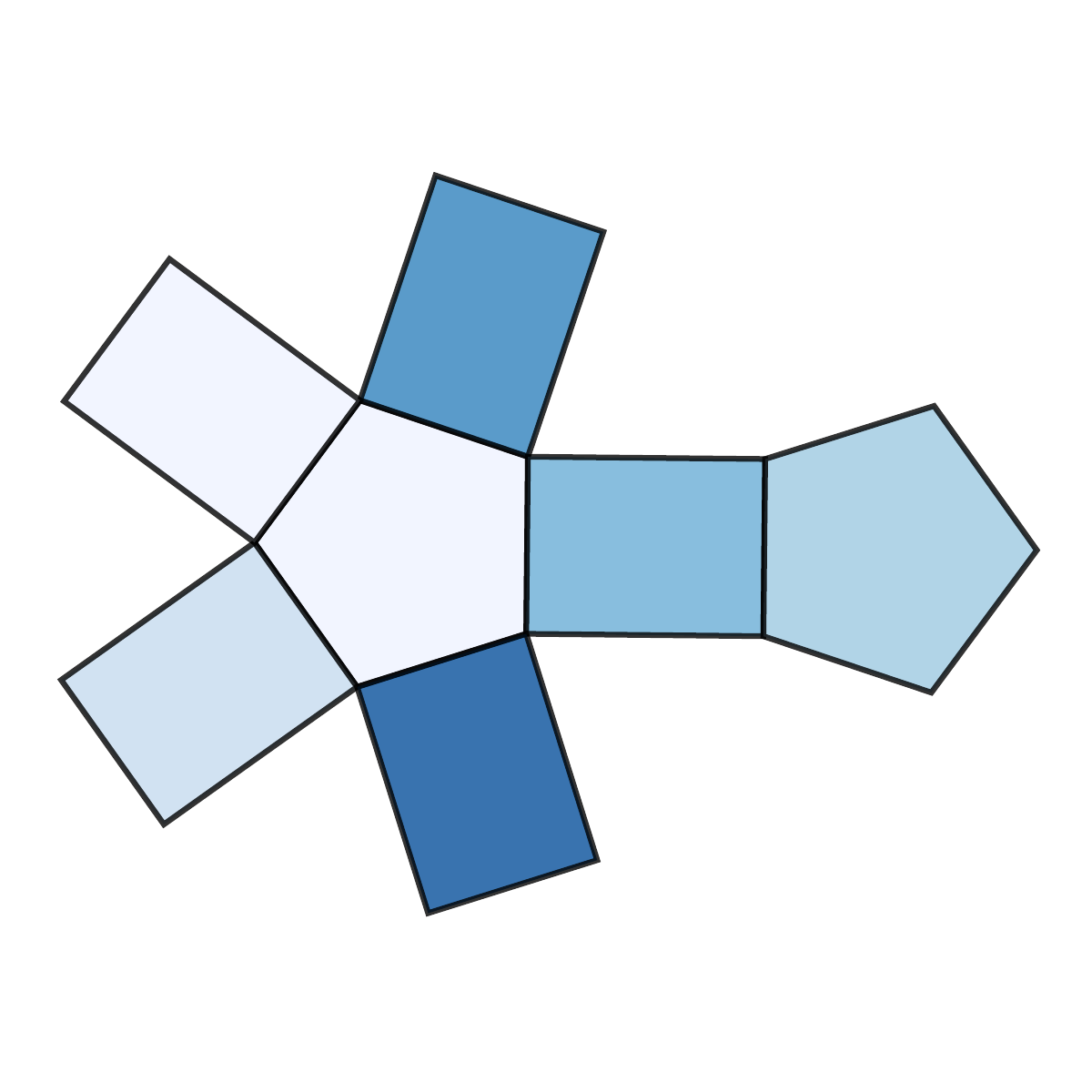}
	\caption{
		Case $\left|\mathcal{K}\right| = 7$.
		(Top)  Polyhedra corresponding to
		iterations $i=0,50,80,1000$, and $1500$.
		(Bottom) Values of $\lambda(P^i)\,|P^i|^{2/3}$ and
		the unfolding of the optimal polyhedron $P^{*}$.
		The initial polyhedron $P^{0}$ was
		a tetrahedron with three truncated vertices.
		The polyhedra $P^{i}$ for
		$i \geq 50$ were isomorphic to a pentagonal prism.
		The initial and final polyhedra are not isomorphic.
	}
	\label{fig:p07}
\end{figure}

\begin{figure}
	\centering
	\includegraphics[width=0.71\textwidth]{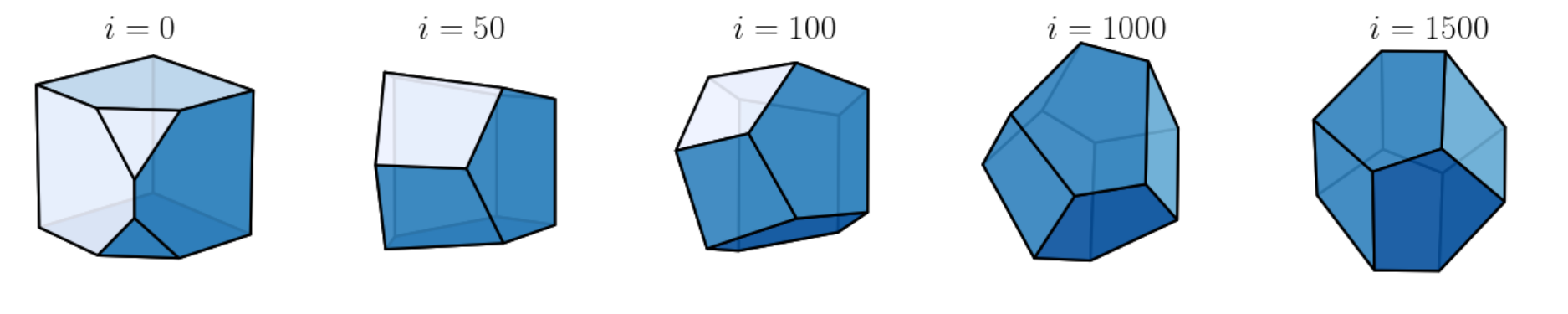}
	\vspace{0.1cm}
	\includegraphics[width=0.33\textwidth]{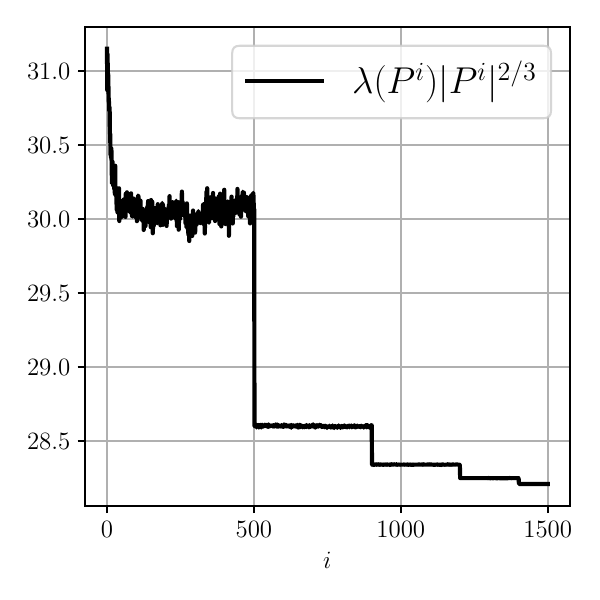}
	\hspace{0.2cm}
	\includegraphics[width=0.33\textwidth]{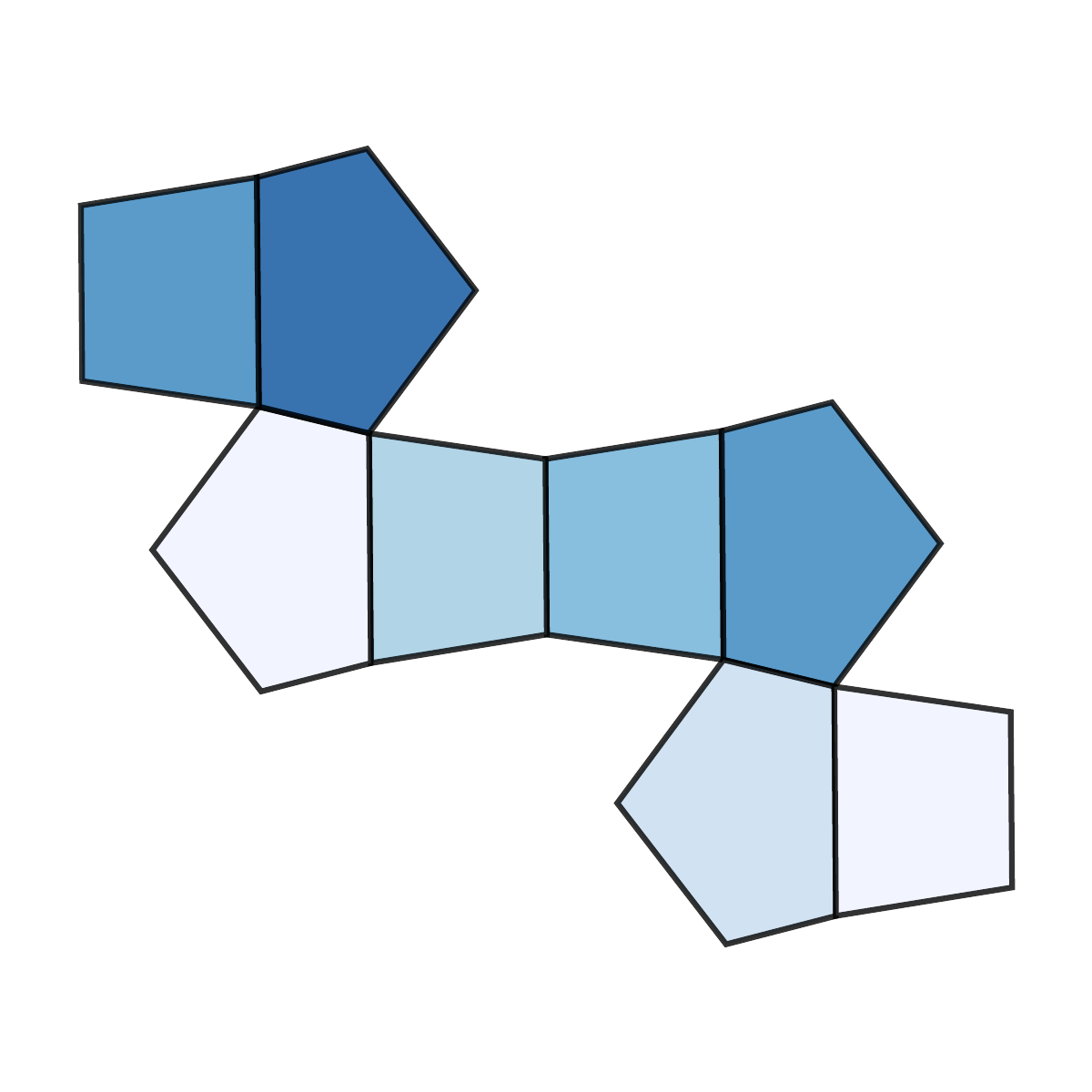}
	\caption{
		Case $\left|\mathcal{K}\right| = 8$.
		(Top) Polyhedra corresponding to
		iterations $i=0,50,100,1000$, and $1500$.
		(Bottom) Values of $\lambda(P^i)\,|P^i|^{2/3}$ and
		the unfolding of the optimal polyhedron $P^{*}$.
		The initial polyhedron $P^{0}$ was
		a cube with two truncated vertices.
		The polyhedra $P^{i}$ for
		$i \geq 50$ were isomorphic to a one another.
		The initial and final polyhedra are not isomorphic.
	}
	\label{fig:p08}
\end{figure}

\begin{figure}
	\centering
	\includegraphics[width=0.71\textwidth]{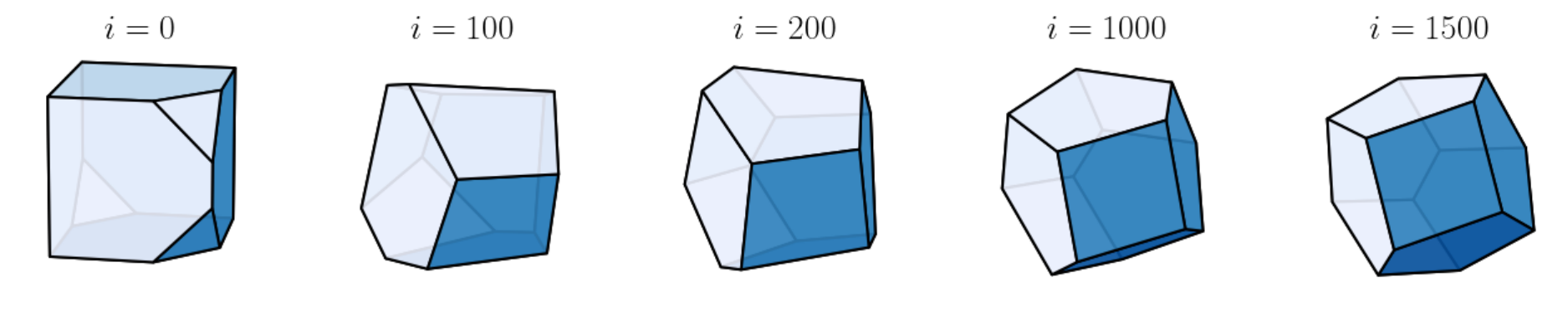}
	\vspace{0.1cm}
	\includegraphics[width=0.33\textwidth]{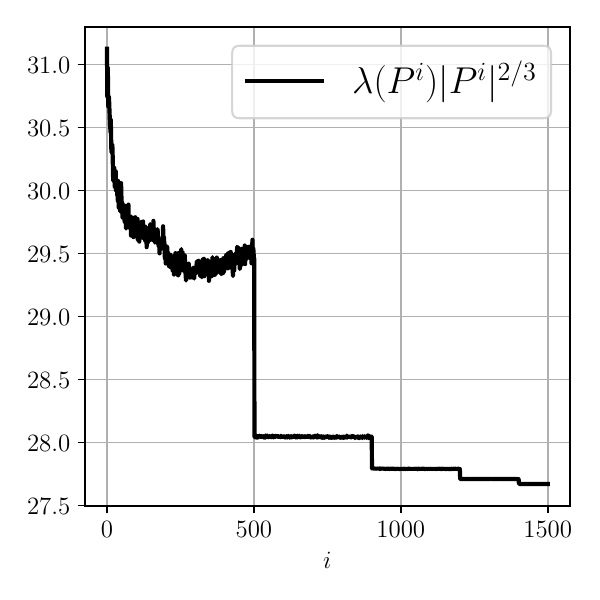}
	\hspace{0.2cm}
	\includegraphics[width=0.33\textwidth]{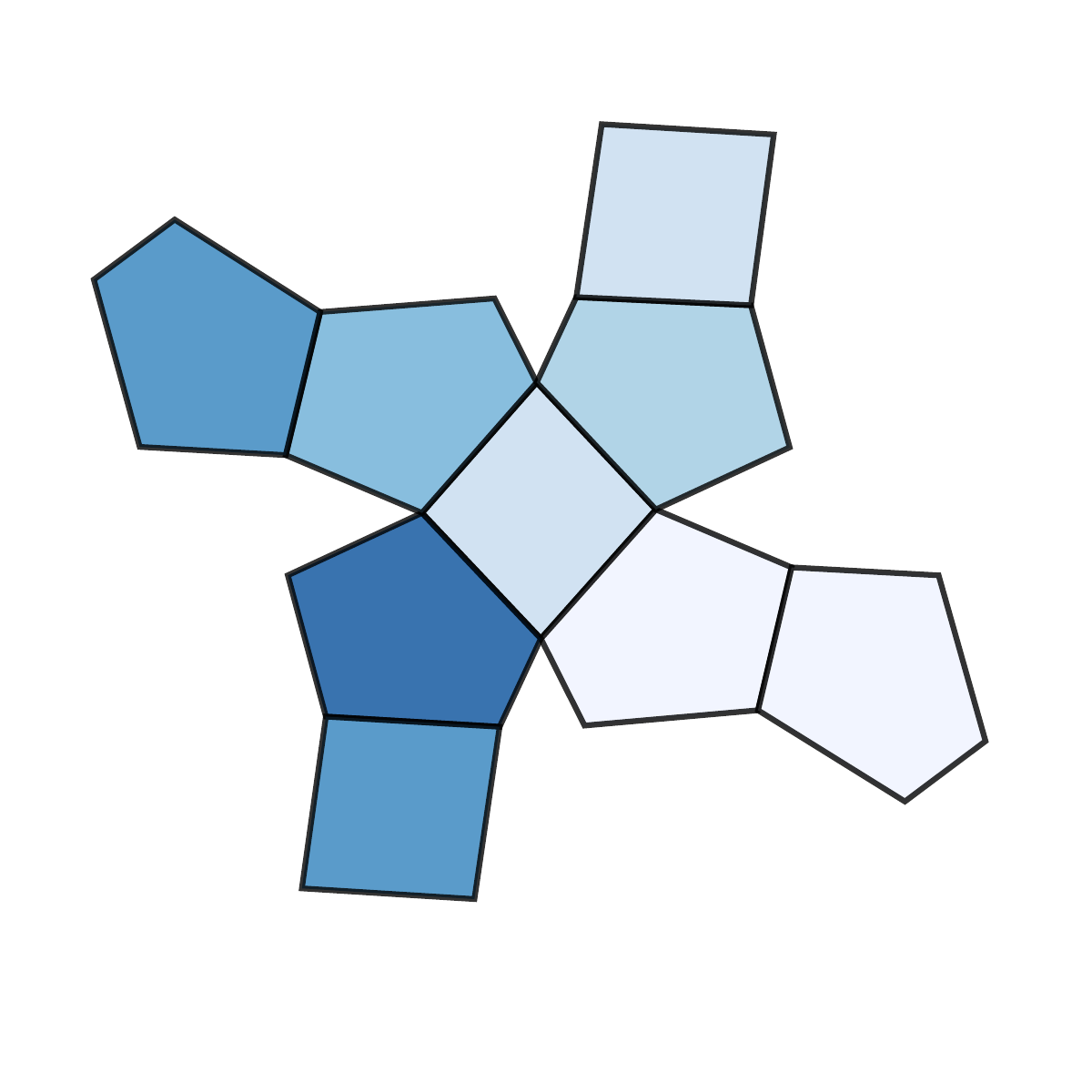}
	\caption{
		Case $\left|\mathcal{K}\right| = 9$.
		(Top) Polyhedra corresponding to
		iterations $i=0,100,200,1000$, and $1500$.
		(Bottom) Values of $\lambda(P^i)\,|P^i|^{2/3}$ and
		the unfolding of the optimal polyhedron $P^{*}$.
		The initial polyhedron $P^{0}$ was
		a cube with three truncated vertices.
		The polyhedra $P^{i}$ for
		$i \geq 200$ were isomorphic to a one another.
		The initial and final polyhedra are not isomorphic.
	}
	\label{fig:p09}
\end{figure}

\begin{figure}
	\centering
	\includegraphics[width=0.71\textwidth]{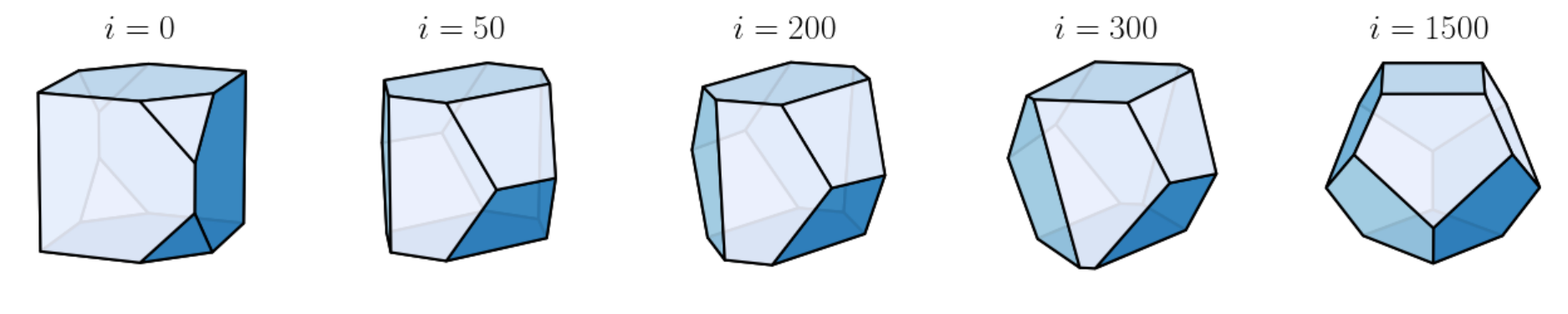}
	\vspace{0.1cm}
	\includegraphics[width=0.33\textwidth]{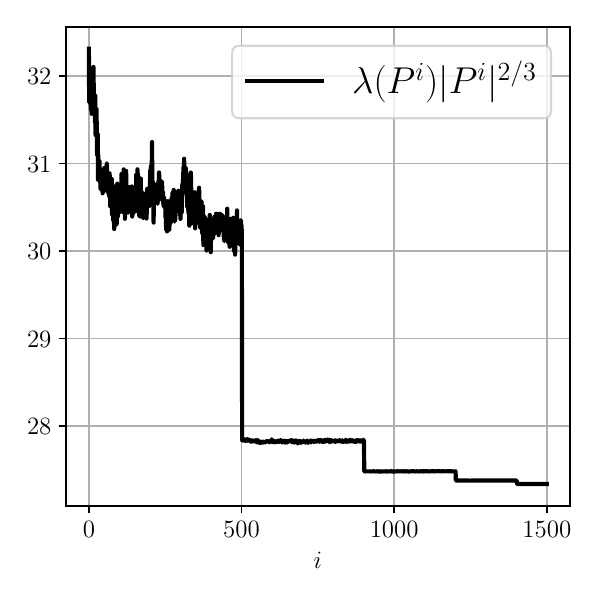}
	\hspace{0.2cm}
	\includegraphics[width=0.33\textwidth]{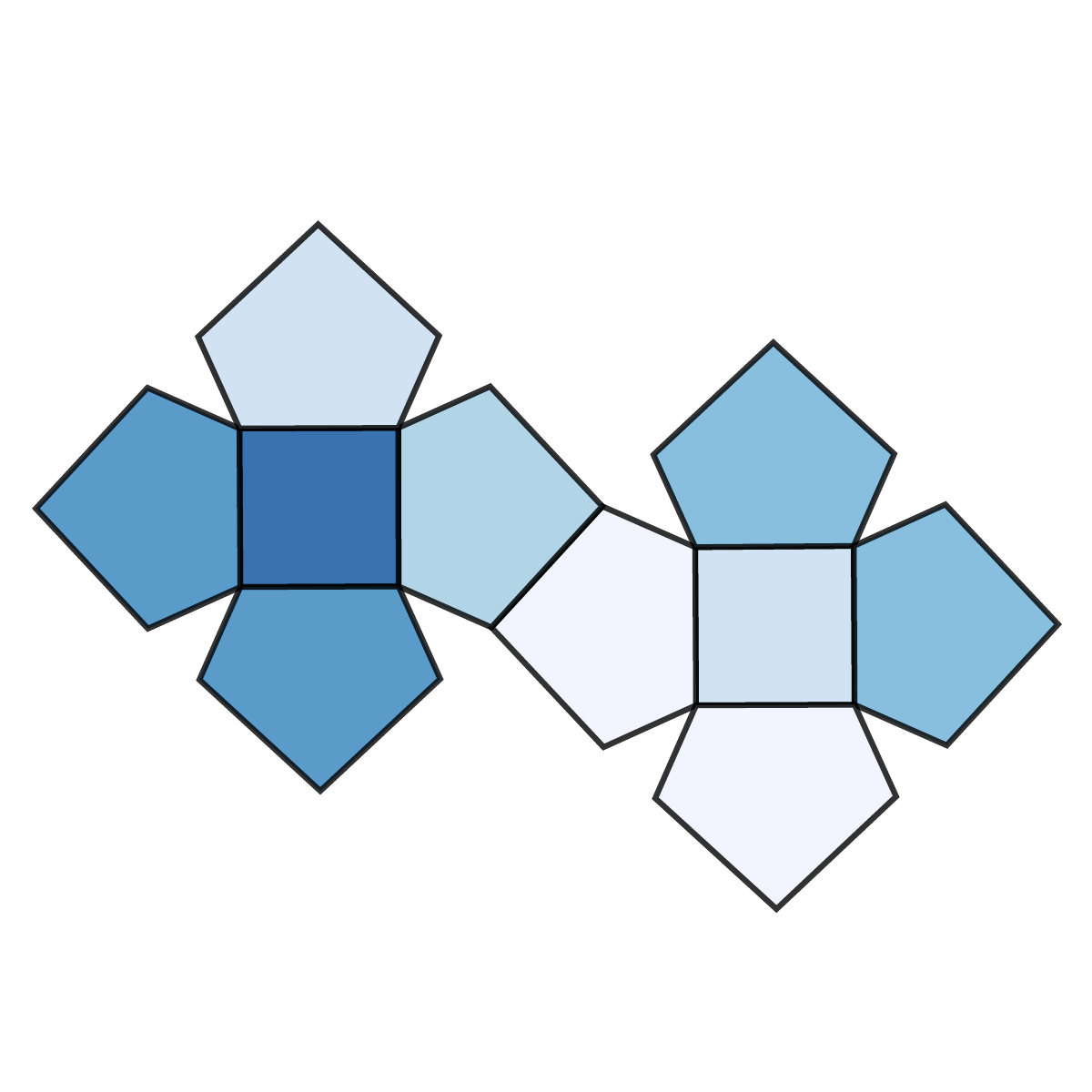}
	\caption{
		Case $\left|\mathcal{K}\right| = 10$.
		(Top) Polyhedra corresponding to
		iterations $i=0,50,200,300$, and $1500$.
		(Bottom) Values of $\lambda(P^i)\,|P^i|^{2/3}$ and
		the unfolding of the optimal polyhedron $P^{*}$.
		The initial polyhedron $P^{0}$ was
		a cube with four truncated vertices.
		The polyhedra $P^{i}$ for
		$i \geq 300$ were isomorphic to a one another.
		The initial and final domains are not isomorphic.
	}
	\label{fig:p10}
\end{figure}

\begin{figure}
	\centering
	\includegraphics[width=0.71\textwidth]{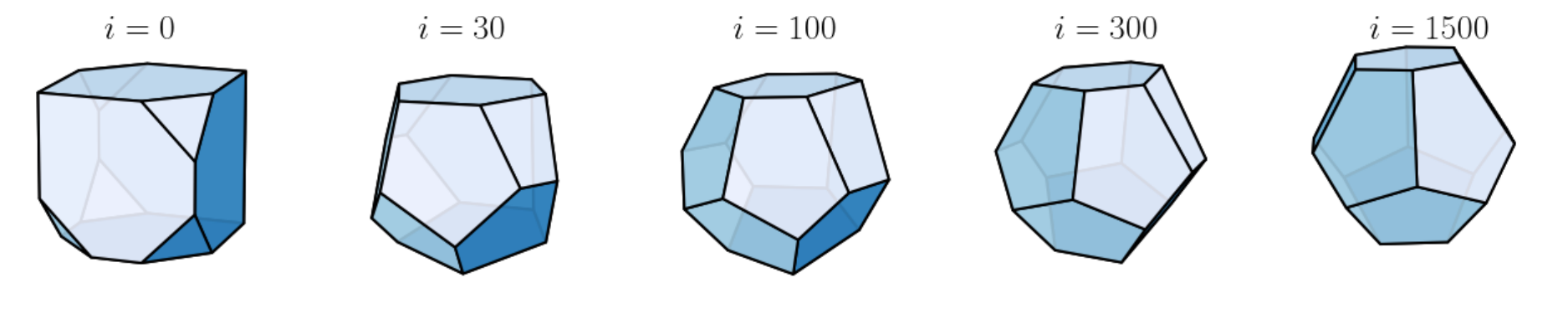}
	\vspace{0.1cm}
	\includegraphics[width=0.33\textwidth]{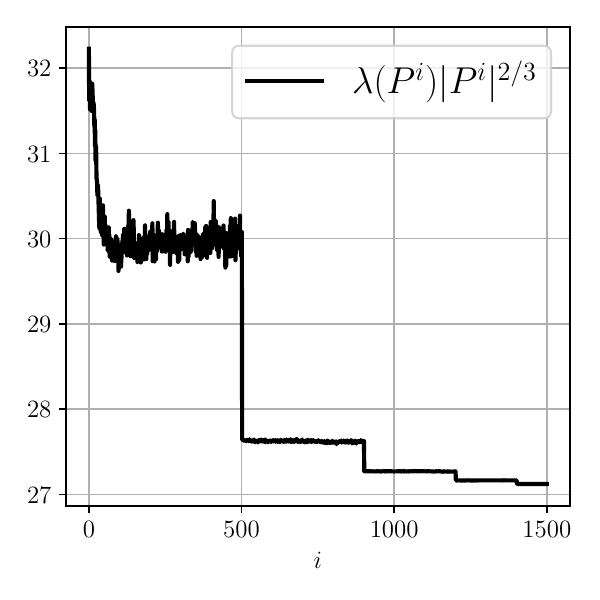}
	\hspace{0.2cm}
	\includegraphics[width=0.33\textwidth]{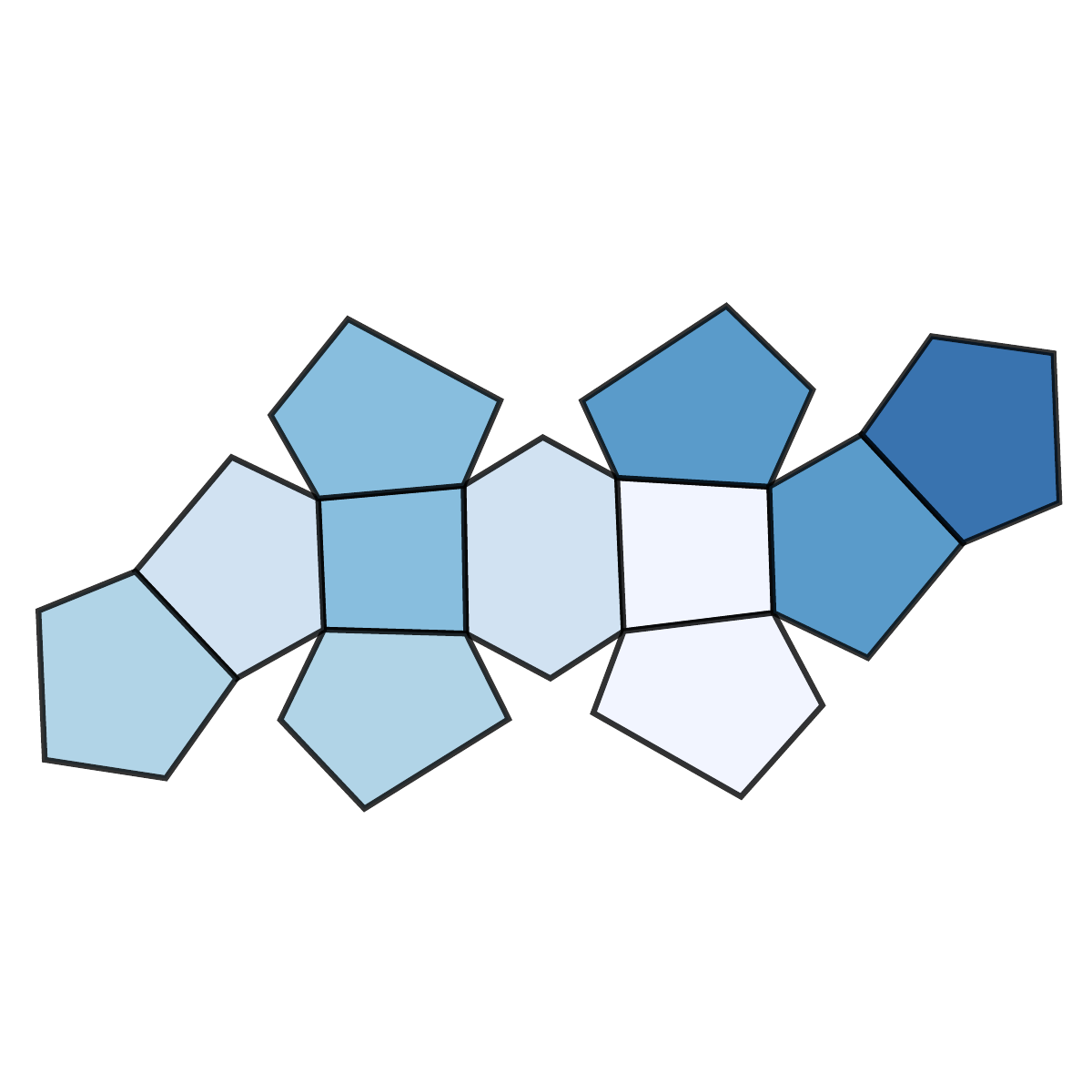}
	\caption{
		Case $\left|\mathcal{K}\right| = 11$.
		(Top) Polyhedra corresponding to
		iterations $i=0,30,100,300$, and $1500$.
		(Bottom) Values of $\lambda(P^i)\,|P^i|^{2/3}$ and
		the unfolding of the optimal polyhedron $P^{*}$.
		The initial polyhedron $P^{0}$ was
		a cube with five truncated vertices.
		The polyhedra $P^{i}$ for
		$i \geq 30$ were isomorphic to a one another.
		The initial and final domains are no isomorphic.
	}
	\label{fig:p11}
\end{figure}

\begin{figure}
	\centering
	\includegraphics[width=0.71\textwidth]{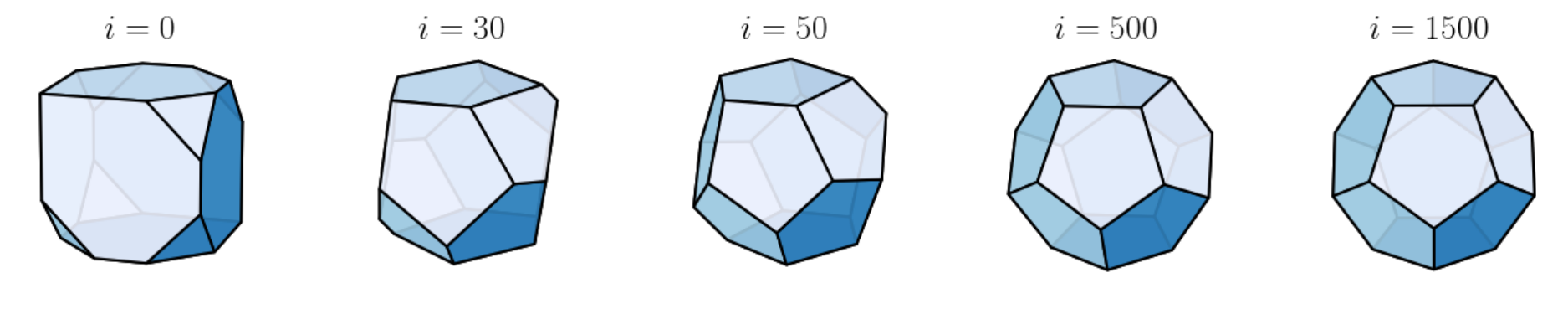}
	\vspace{0.1cm}
	\includegraphics[width=0.33\textwidth]{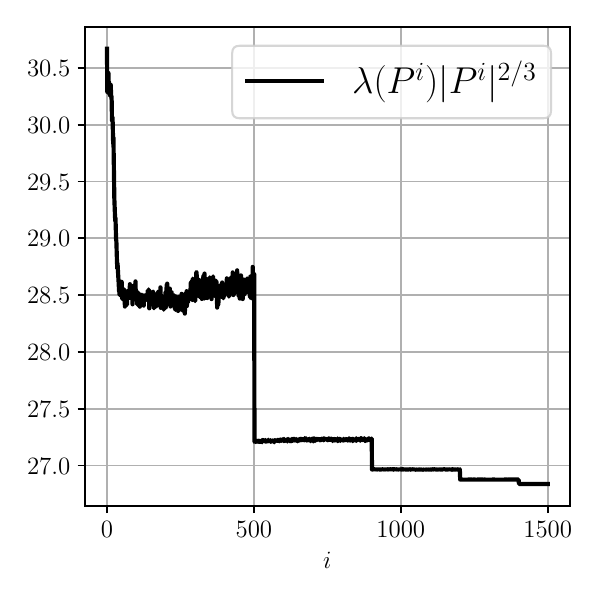}
	\hspace{0.2cm}
	\includegraphics[width=0.33\textwidth]{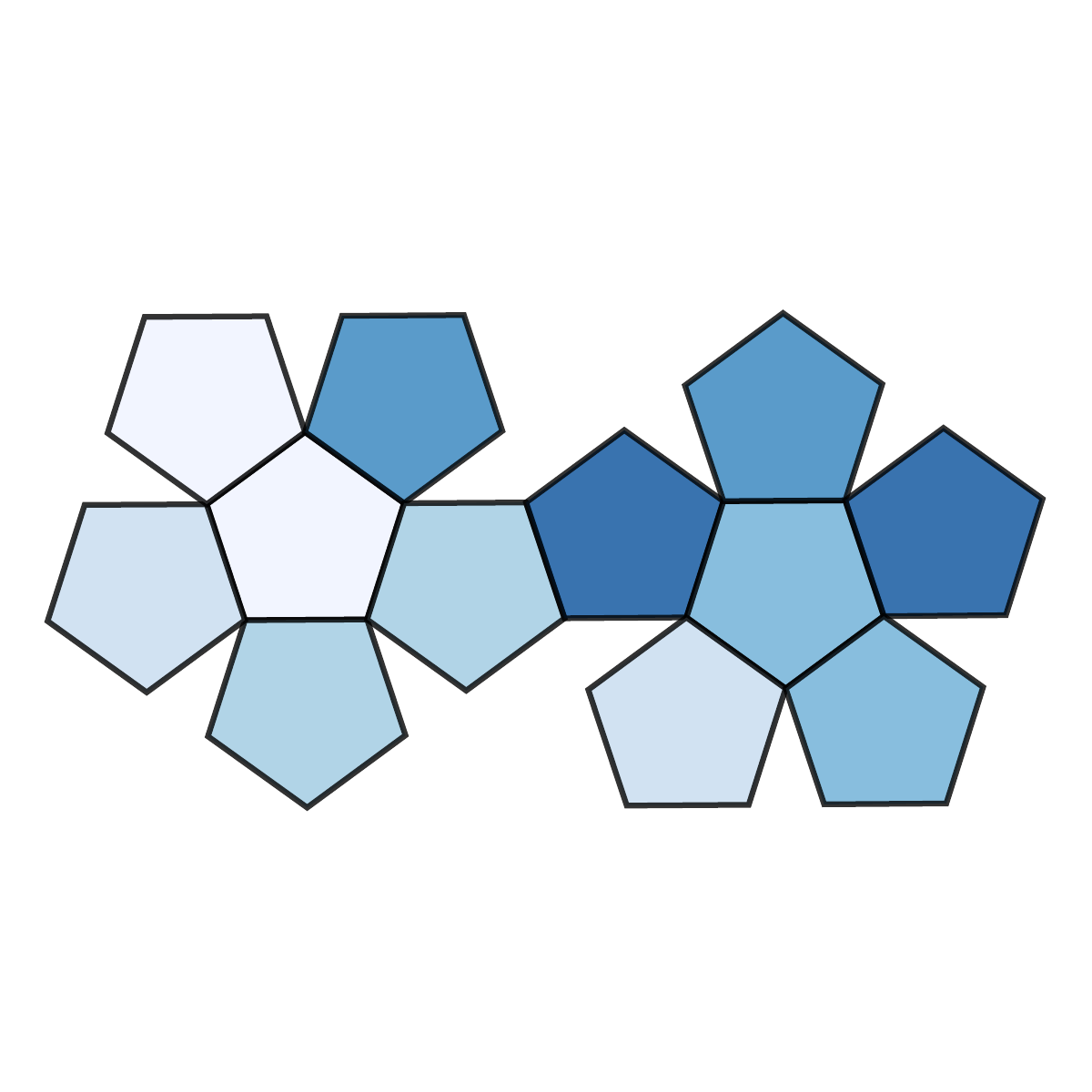}
	\caption{
		Case $\left|\mathcal{K}\right| = 12$.
		(Top) Polyhedra corresponding to
		iterations $i=0,30,50,500$, and $1500$.
		(Bottom) Values of $\lambda(P^i)\,|P^i|^{2/3}$ and
		the unfolding of the optimal polyhedron $P^{*}$.
		The initial polyhedron $P^{0}$ was
		a cube with six truncated vertices.
		The polyhedra $P^{i}$ for
		$i \geq 30$ were isomorphic to a dodecahedron.
		The initial and final domains are no isomorphic.
	}
	\label{fig:p12}
\end{figure}

\begin{figure}
	\centering
	\includegraphics[width=0.71\textwidth]{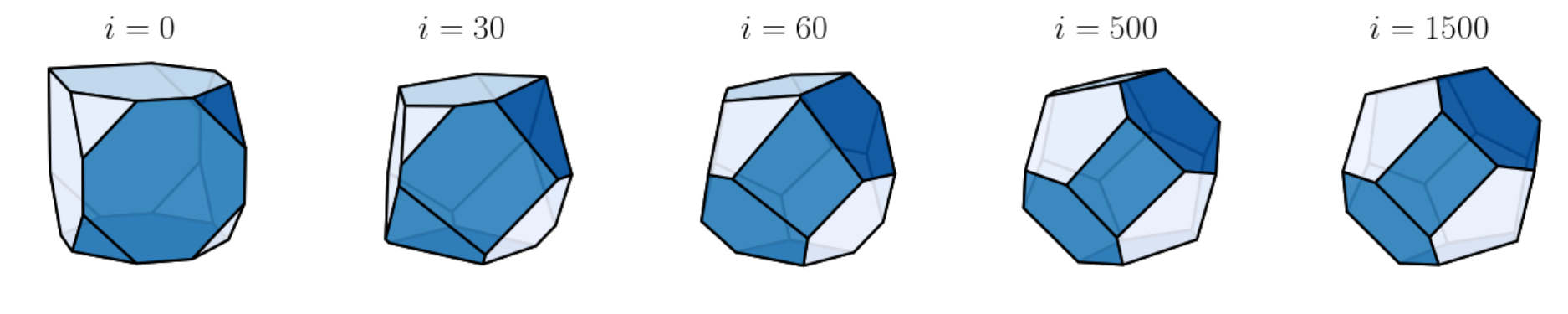}
	\vspace{0.1cm}
	\includegraphics[width=0.33\textwidth]{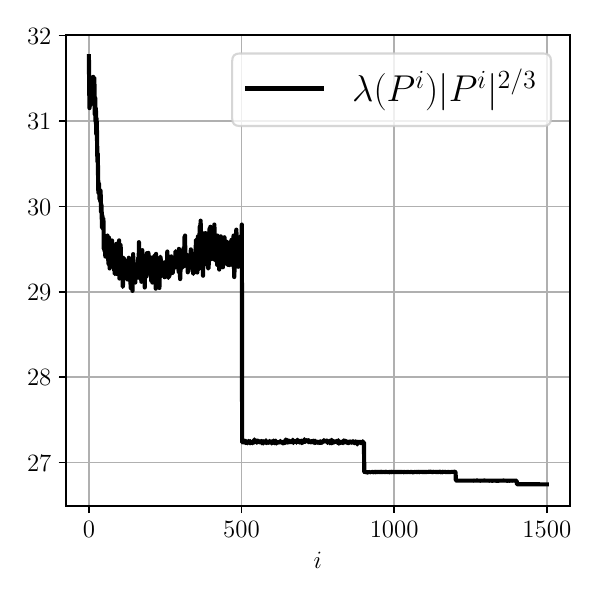}
	\hspace{0.2cm}
	\includegraphics[width=0.33\textwidth]{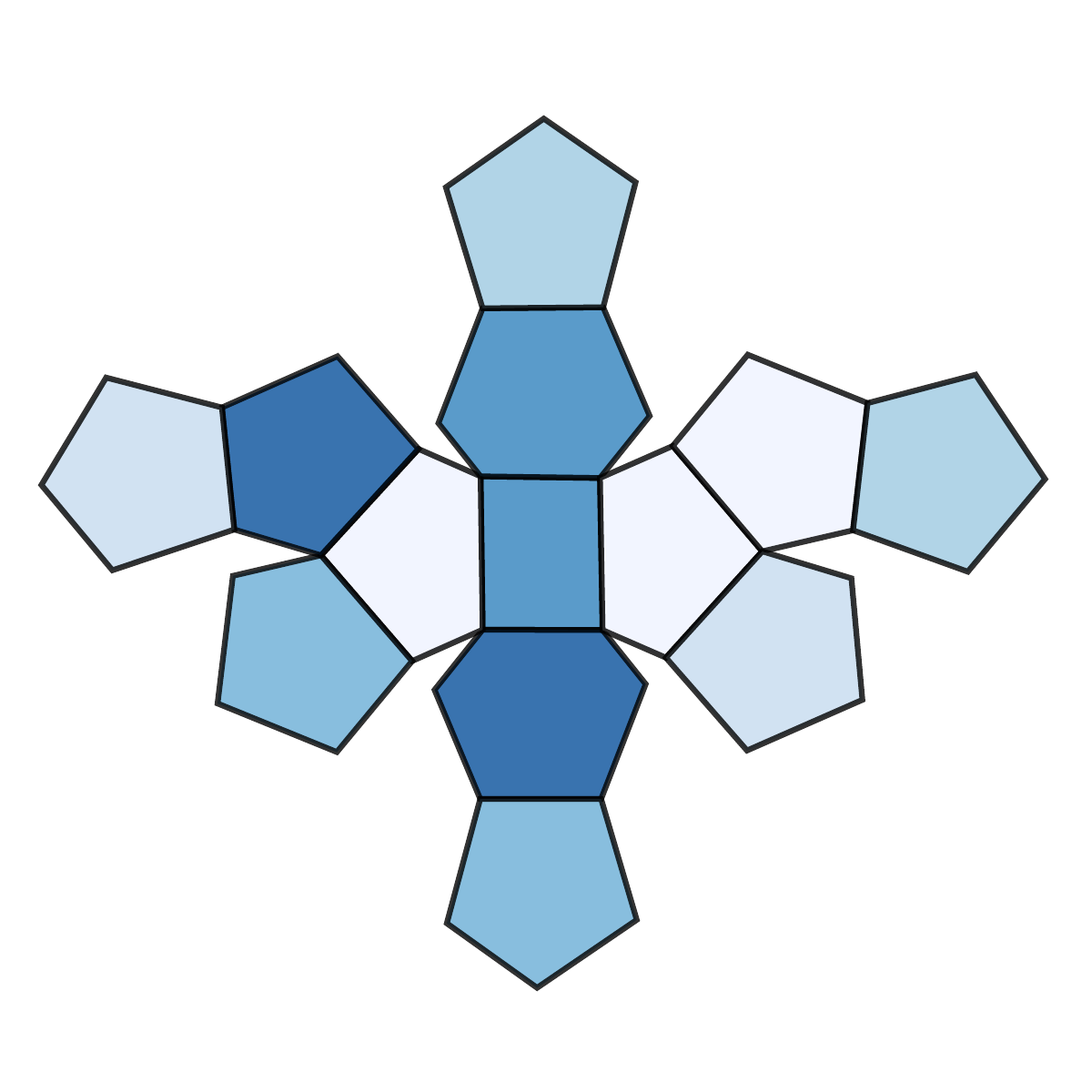}
	\caption{
		Case $\left|\mathcal{K}\right| = 13$.
		(Top) Polyhedra corresponding to
		iterations $i=0,30,60,500$ and $1500$.
		(Bottom) Values of $\lambda(P^i)\,|P^i|^{2/3}$ and
		the unfolding of the optimal polyhedron $P^{*}$.
		The initial polyhedron $P^{0}$ was
		a cube with seven truncated vertices.
		The polyhedra $P^{i}$ for
		$i \geq 100$ were isomorphic to a one another.
		The initial and final domains are no isomorphic.
	}
	\label{fig:p13}
\end{figure}

\begin{figure}
	\centering
	\includegraphics[width=0.71\textwidth]{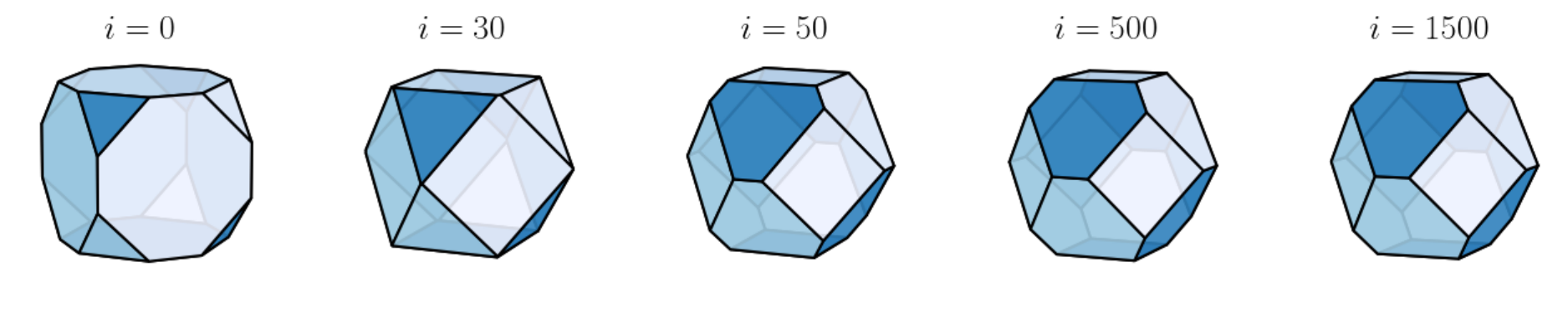}
	\vspace{0.1cm}
	\includegraphics[width=0.33\textwidth]{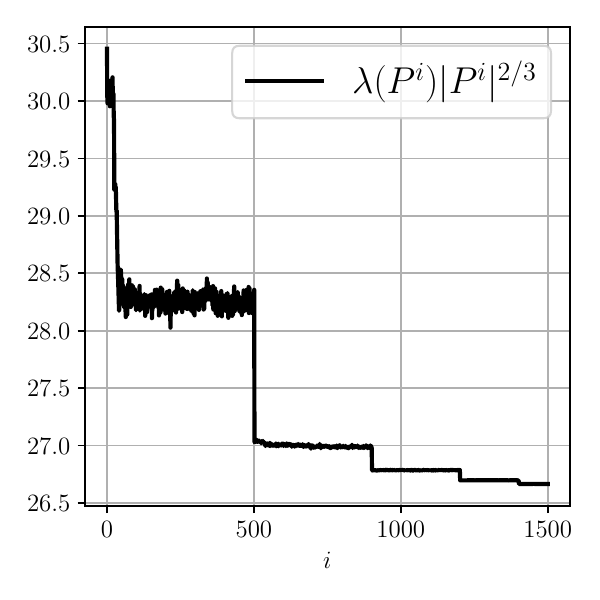}
	\hspace{0.2cm}
	\includegraphics[width=0.33\textwidth]{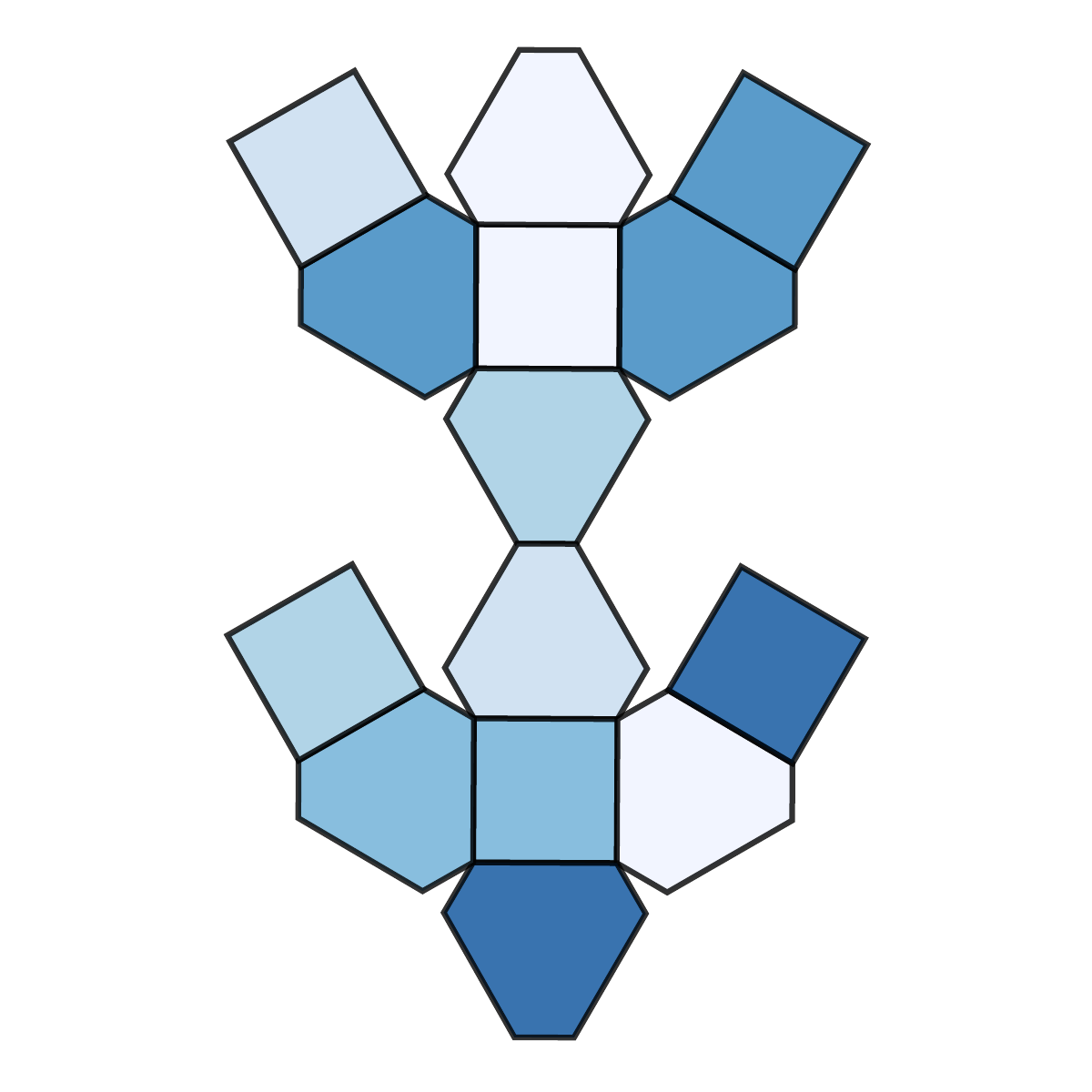}
	\caption{
		Case $\left|\mathcal{K}\right| = 14$.
		(Top) Polyhedra corresponding to
		iterations $i=0,30,50,500$, and $1500$.
		(Bottom) Values of $\lambda(P^i)\,|P^i|^{2/3}$ and
		the unfolding of the optimal polyhedron $P^{*}$.
		The initial polyhedron $P^{0}$ was
		a cube with all its truncated vertices.
		The polyhedra $P^{i}$ for
		$i \geq 40$ were isomorphic to a tetrakaidecahedron.
		The initial and final domains are no isomorphic.
	}
	\label{fig:p14}
\end{figure}

\begin{figure}
	\centering
	\includegraphics[width=0.9\textwidth]{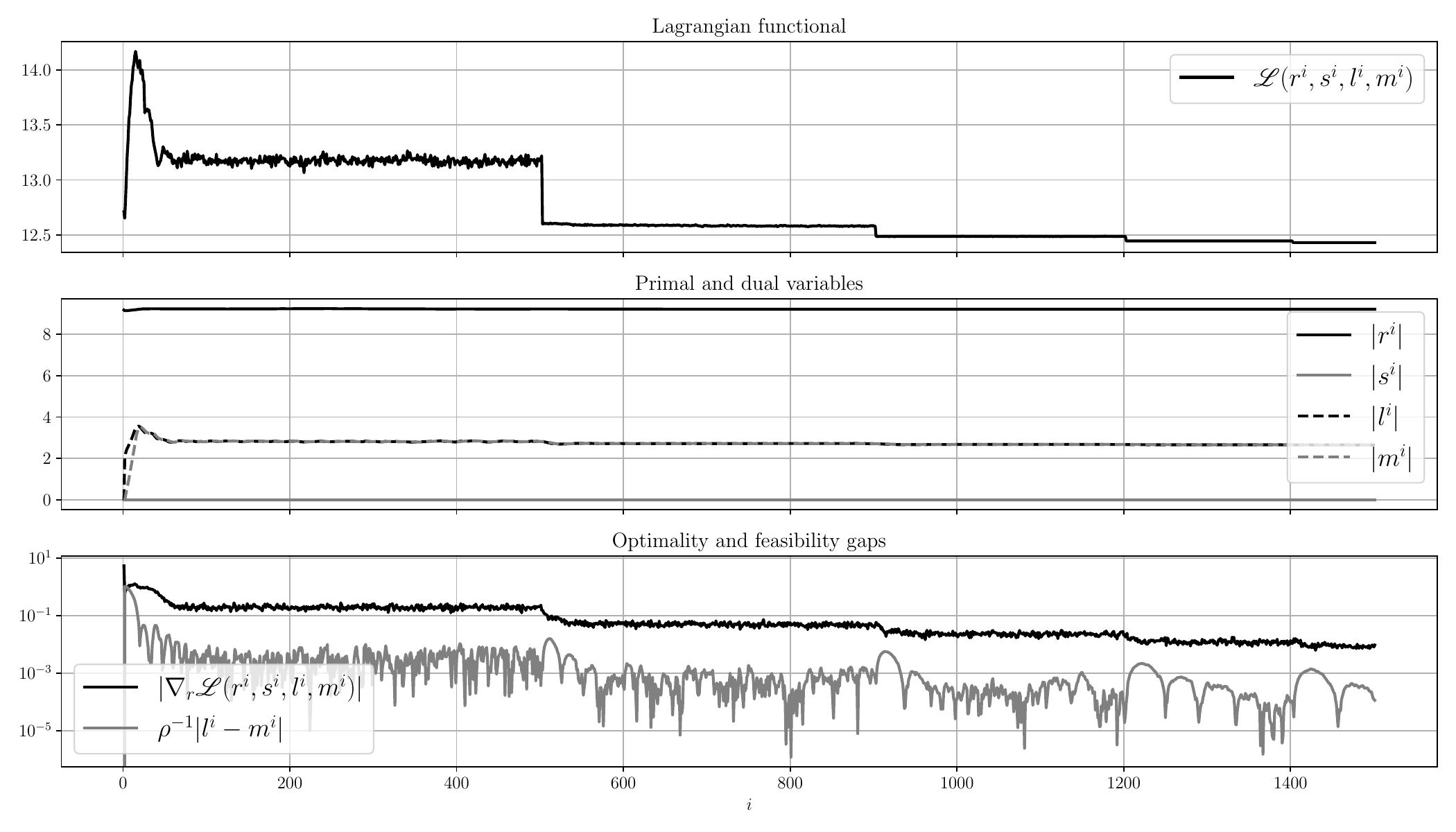}
	\caption{
		Values of $| \nabla_{r} \mathscr{L}(r^i,s^i,l^i,m^i) |$ (black) and
		$||P^i| - \pi|$ (gray) along the iterations for the test with $|\mathcal{K}|=14$ facets.
		These results show that algorithm~\eqref{eq:algo} 
		produces a decrease in the gradient norm and 
		enforces the volume constraint.
	}\label{fig:gaps}
\end{figure}

\clearpage

\appendix

\section{GeoGebra commands for reproducing the optimal polyhedra}

The following links provide \texttt{GeoGebra} commands for reproducing
and visualizing the optimal polyhedra.
For instance, in the case with $\left|\mathcal{K}\right| = 5$ facets,
see \url{https://www.geogebra.org/3d/xet5kbgg} for the optimal polyhedron,
together with its vertices and facets:
\begin{figure}[h]
	\centering
	\includegraphics[width=0.35\textwidth]{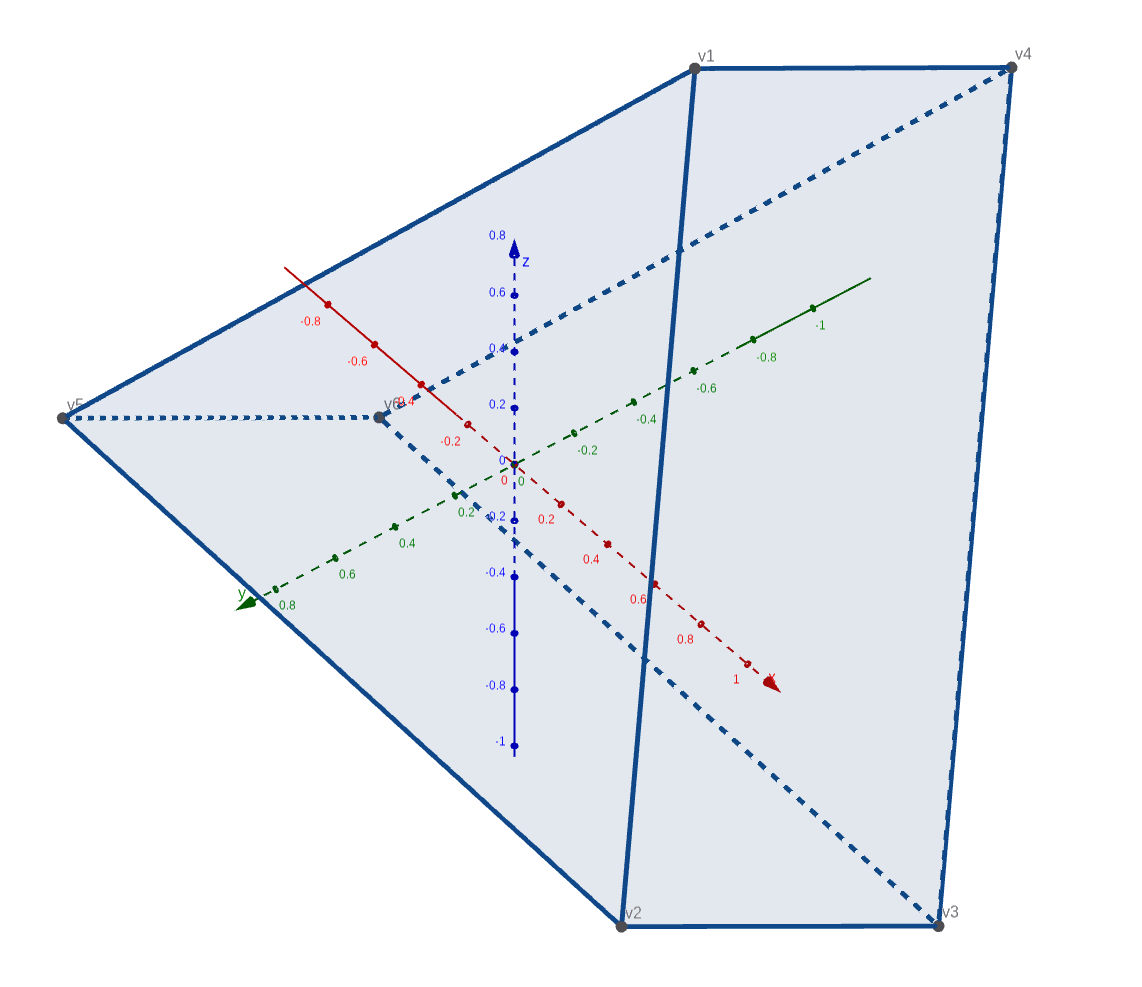}
\end{figure}
\begin{lstlisting}
v1=Point({-1.11202959978901,0.702097019822660,-0.171572502895451})
v2=Point({-1.12456315042285,-0.762271965502623,-0.197040948792147})
v3=Point({-0.066183819257264,0.711108449634161,-1.196613428820373})
v4=Point({-0.078544744866793,-0.753254337976977,-1.222250944292790})
v5=Point({-0.099202288482267,-0.789603080025962,0.848108864198090})
v6=Point({-0.086338699585014,0.674747670706212,0.873913636147723})
v7=Point({0.959366251944387,0.683760623392674,-0.151144332933853})
v8=Point({0.946675295578225,-0.780583930144844,-0.177118205313624})
f1 = Polygon({v5, v6, v7, v8})
f2 = Polygon({v1, v2, v5, v6})
f3 = Polygon({v2, v4, v8, v5})
f4 = Polygon({v1, v3, v7, v6})
f5 = Polygon({v1, v2, v4, v3})
f6 = Polygon({v3, v4, v8, v7})
\end{lstlisting}

The remaining cases are listed below:
\begin{itemize}
	\item $\left|\mathcal{K}\right| = 4$ facets, see \url{https://www.geogebra.org/3d/acadmuzp};
	\item $\left|\mathcal{K}\right| = 6$ facets, see \url{https://www.geogebra.org/3d/mrwp6nqb};
	\item $\left|\mathcal{K}\right| = 7$ facets, see \url{https://www.geogebra.org/3d/nrnyvwbj};
	\item $\left|\mathcal{K}\right| = 8$ facets, see \url{https://www.geogebra.org/3d/xj2vem62};
	\item $\left|\mathcal{K}\right| = 9$ facets, see \url{https://www.geogebra.org/3d/te7kjbbq};
	\item $\left|\mathcal{K}\right| = 10$ facets, see \url{https://www.geogebra.org/3d/q8skk9cn};
	\item $\left|\mathcal{K}\right| = 11$ facets, see \url{https://www.geogebra.org/3d/xayqz6te};
	\item $\left|\mathcal{K}\right| = 12$ facets, see \url{https://www.geogebra.org/3d/cwnvvrwa};
	\item $\left|\mathcal{K}\right| = 13$ facets, see \url{https://www.geogebra.org/3d/p36zg6fn};
	\item $\left|\mathcal{K}\right| = 14$ facets, see \url{https://www.geogebra.org/3d/haf2yauv}.
\end{itemize}

\clearpage

\bibliographystyle{plain}
\bibliography{eigen.bib}

\end{document}